\newtheorem{thm}{Theorem}[section]
\newtheorem{cor}[thm]{Corollary}
\newtheorem{lem}[thm]{Lemma}
\newtheorem{prop}[thm]{Proposition}
\theoremstyle{definition}
\theoremstyle{remark}
\numberwithin{equation}{section}
\begin{document}

\title[Discrete Asymptotic Nets with Constant Affine Mean Curvature]{Discrete Asymptotic Nets with Constant Affine Mean Curvature}%
\author{Anderson Reis de Vargas and Marcos Craizer}%
\email{}%

\thanks{Author's e-mail address: anderson.vargas.1@cp2.edu.br and craizer@puc-rio.br\\
Anderson R. Vargas is a teacher in Colégio Pedro II. Marcos Craizer is a professor in Pontifical Catholic University of Rio de Janeiro.}%
\subjclass{53A70, 53A15}%
\keywords{Discrete Differential Geometry, Affine Geometry, Asymptotic Nets, Discrete Affine Spheres, Discrete Affine Minimal Surfaces}%

\begin{abstract}
In this paper we define the class of constant affine mean curvature (CAMC) discrete asymptotic nets, which contains the well-known classes of affine spheres and affine minimal asymptotic nets. This class is defined by considering fields of compatible interpolating quadrics, i.e., quadrics that have common tangent planes at the edges of the net. We show that, for CAMC asymptotic nets, ruled discrete asymptotic nets is equivalent to ruled compatible interpolating quadrics. Moreover, 
we prove discrete counterparts of some known properties of the Demoulin transform of a smooth CAMC surface. 
\end{abstract}
\maketitle

\section{Introduction}

In this paper we consider discrete asymptotic nets, which are natural nets for the discretization
of surfaces parameterized by asymptotic coordinates. Discrete asymptotic nets have been the object of
recent and ancient research by many geometers, as one can see in the list of references of
this paper (\cite{Bobenko1999},\cite{Bobenko2008},\cite{Craizer2010},\cite{Rorig2014},\cite{Kaferbock2013},\cite{Matsuura2003},\cite{McCarthy-Schief},\cite{Schief-Szeres}).
There are many classes of discrete affine surfaces that have been defined as subclasses of discrete asymptotic nets:
Discrete affine spheres (\cite{Bobenko1999}), discrete improper
affine spheres (\cite{Matsuura2003}), and discrete affine minimal surfaces (\cite{Craizer2010},\cite{Kaferbock2013}). In this paper we define the constant affine mean curvature (CAMC) discrete asymptotic nets, which is a class that contains all the above classes. 

A quadric patch interpolates a quadrangle of a discrete asymptotic net if it passes through the edges of the quadrangle and is contained in its convex hull. A pair of interpolating quadrics at adjacent quadrangles will be called {\it compatible} if, at the common edge of the quadrangles, the tangent planes of the quadrics coincide. A field of interpolating quadrics, one at each quadrangle of the asymptotic net, will be called {\it compatible} if they are compatible at each pair of adjacent quadrangles. 
It is well-known that, given a discrete asymptotic net, there exists a $1$-parameter family of compatible fields of interpolating quadrics (\cite{Rorig2014}).  Moreover, a discrete asymptotic net is a discrete affine sphere if and only if it admits a compatible field of interpolating quadrics with the same center. Similarly, a discrete asymptotic net is affine minimal if and only if it admits a compatible field of interpolating paraboloids. We define the CAMC class as the discrete asymptotic nets that admit a compatible field of interpolating quadrics with the same affine mean curvature. 

We prove some discrete counterparts of known results for smooth CAMC surfaces that reinforce the above definition of discrete CAMC asymptotic nets. The first one is related to ruled discrete asymptotic nets. We show that, in the CAMC class, a ruled discrete asymptotic net admits a compatible field of interpolating quadrics that is constant along the ruling direction. 

The other results that we prove for discrete CAMC asymptotic nets are related to Demoulin transforms. We first prove a discrete counterpart of the result that says that, in the smooth case, the intersection of the affine normal with the Lie quadric determines a surface that is a Demoulin transform of the original surface if and only if the affine mean curvature is constant. Then we prove that, as occur in the smooth case, a discrete affine minimal asymptotic net is a discrete $Q$-surface.  Finally, we prove that an affine sphere is Demoulin, and conversely, a Demoulin CAMC asymptotic net is an affine sphere, results which also hold in the smooth category.

\section{Affine Properties of Smooth Surfaces}

In this section we recall some affine properties of a non-degenerate smooth surface in $3$-space with indefinite Blaschke metric.

\subsection{Affine equations in asymptotic coordinates}

A non-degenerate smooth surface with indefinite Blaschke metric admits asymptotic coordinates. Consider an immersion
$f:U\subset\mathbb{R}^2\to\mathbb{R}^3$, such that $(u,v)\in U$ are asymptotic coordinates. Then we can write
\begin{equation*}
f_{uu}=\frac{\omega_u}{\omega}f_u+\frac{A}{\omega}f_v,\ \ f_{vv}=\frac{B}{\omega}f_u+\frac{\omega_v}{\omega}f_v,
\end{equation*}
where $\omega^2=\left[f_u,f_v,f_{uv}\right]$, and $Adu^3+Bdv^3$ is the cubic form (see e.g. \cite[ch.1]{Buchin1983}). Moreover, denoting 
by $\xi$ the affine Blaschke normal vector field, we have that
\begin{equation*}
f_{uv}=\omega \xi,
\end{equation*}
and
\begin{equation}\label{eq:Shape}
\xi_u=-Hf_u+\frac{A_v}{\omega^2}f_v, \ \ \xi_v=\frac{B_u}{\omega^2}f_u-Hf_v,
\end{equation}
where $H$ is the affine mean curvature. The compatibility equations are
\begin{equation}\label{eq:Compatibility1}
\omega^3H= \omega_u\omega_v-\omega_{uv}\omega  -AB
\end{equation}
together with
\begin{equation*}
\omega^3H_u=AB_u-\omega A_{vv}+A_v\omega_v,\ \ {\rm and} \ \ \omega^3H_v=BA_v-\omega B_{uu}+B_u\omega_u.
\end{equation*}



\subsection{Lie quadric}

The Lie quadric at a point $(u,v)$ of the surface $f$ is the limit of the quadric that passes through three infinitesimally close asymptotic lines. In homogeneous coordinates, we write $\bar{f}=(f:1)$, and the Lie quadric at a point $(u,v)$ is given by
\begin{equation}\label{eq:LieQuadric1}
Q(s,t)=\bar{f}_{uv}+\left( s-\frac{A_v}{2A} \right)\bar{f}_u+\left( t-\frac{B_u}{2B}  \right)\bar{f}_v+\left( \frac{\omega H}{2}+\frac{A_vB_u}{4AB} -t\frac{A_v}{2A}-s\frac{B_u}{2B}+st  \right)\bar{f},
\end{equation}
(\cite{Ferapontov},\cite{Sasaki}). Writing $Q=X_1f+X_2f_u+X_3f_v+X_4f_{uv}$, we obtain
\begin{equation}\label{eq:LieQuadric2}
X_2X_3-X_1X_4+\frac{1}{2}\omega H X_4^2=0,
\end{equation}
which, by Equation \eqref{eq:Compatibility1}, coincides with  \cite[p.145]{Lane1942}. By taking $X_1=1$ in Equation \eqref{eq:LieQuadric2}, we obtain an affine equation of the Lie quadric. From this affine equation, it is easy to verify that the affine mean curvature of the Lie quadric
at a point $(u,v)$ coincides with the affine mean curvature of the original surface at the same point.

\subsection{Demoulin transforms of a CAMC surface}

The affine normal line of $f$ at a point $(u,v)$ is given by
$\lambda\to f+\lambda f_{uv}.$
Taking $s=\frac{A_v}{2A}$, $t=\frac{B_u}{2B}$ in Equation \eqref{eq:LieQuadric1}, we obtain that, besides $f(u,v)$, the intersection of the normal line with the Lie quadric is at the point
\begin{equation}\label{eq:DemoulinTransform}
Z=\bar{f}_{uv}+\left( \frac{\omega H}{2}  \right)\bar{f}.
\end{equation}

\begin{prop}\label{prop:DemoulinCAMC}
The surface $Z$ given by Equation \eqref{eq:DemoulinTransform} is tangent to the Lie quadric if and only if $H_u=H_v=0$. 
\end{prop}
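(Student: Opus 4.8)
The plan is to carry out the tangency test in the projective frame $\{\bar f,\bar f_u,\bar f_v,\bar f_{uv}\}$ and reduce everything to the polar bilinear form of the Lie quadric expressed in this frame. Reading off \eqref{eq:DemoulinTransform}, the coordinates of $Z$ relative to the frame are $\left(\frac{\omega H}{2},0,0,1\right)$; in particular $Z$ satisfies \eqref{eq:LieQuadric2}, as it must. The surface $Z$ is tangent to the Lie quadric at $(u,v)$ precisely when both tangent vectors $Z_u$ and $Z_v$ lie in the tangent plane of that quadric at the point $Z$. Writing $G(X)=X_2X_3-X_1X_4+\frac12\omega H\,X_4^2$ for the quadratic form in \eqref{eq:LieQuadric2} and $B$ for its associated symmetric bilinear form, the tangent hyperplane at $Z$ is $\{Y : B(Z,Y)=0\}$, so the tangency condition becomes $B(Z,Z_u)=B(Z,Z_v)=0$.

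The next step is to compute $Z_u$ and $Z_v$ in the frame. Differentiating $Z=\bar f_{uv}+\frac{\omega H}{2}\bar f$ requires expressing $\bar f_{uuv}$ and $\bar f_{uvv}$ in the frame. Using $\bar f_{uv}=\omega\bar\xi$ together with the shape equations \eqref{eq:Shape} lifted to homogeneous coordinates (the last entries of $\bar f_u,\bar f_v$ vanish, so $\bar\xi_u=-H\bar f_u+\frac{A_v}{\omega^2}\bar f_v$ and symmetrically for $\bar\xi_v$), I obtain
\begin{equation*}
\bar f_{uuv}=\frac{\omega_u}{\omega}\bar f_{uv}-\omega H\,\bar f_u+\frac{A_v}{\omega}\bar f_v,
\end{equation*}
and the $u\leftrightarrow v$, $A\leftrightarrow B$ analogue for $\bar f_{uvv}$. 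Collecting terms yields the frame coordinates
\begin{equation*}
Z_u=\left(\tfrac{(\omega H)_u}{2},\,-\tfrac{\omega H}{2},\,\tfrac{A_v}{\omega},\,\tfrac{\omega_u}{\omega}\right),\qquad
Z_v=\left(\tfrac{(\omega H)_v}{2},\,\tfrac{B_u}{\omega},\,-\tfrac{\omega H}{2},\,\tfrac{\omega_v}{\omega}\right).
\end{equation*}

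Substituting these into $B$, I expect the $\bar f_u,\bar f_v$ entries to drop out, since $Z$ has vanishing second and third coordinates, and the remaining terms to collapse, using $(\omega H)_u=\omega_u H+\omega H_u$, to
\begin{equation*}
B(Z,Z_u)=-\tfrac{\omega}{4}H_u,\qquad B(Z,Z_v)=-\tfrac{\omega}{4}H_v.
\end{equation*}
Since $\omega\neq 0$, these vanish simultaneously if and only if $H_u=H_v=0$, which is the claim.

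The one point requiring care is the projective bookkeeping: the tangent plane must be taken for the Lie quadric at the fixed base point $(u,v)$, while $Z_u,Z_v$ are the full derivatives of the moving surface $Z$. The clean way to reconcile this is to use the polar form of \eqref{eq:LieQuadric2} with the coefficient $\omega H$ frozen at its base value and to feed in the frame coordinates of the derivative vectors; the derivative of $\omega H$ then contributes only through the first coordinate of $Z_u$ and $Z_v$, exactly as it should. Everything else is routine differentiation governed by the structure equations, so this projective normalization is really the only conceptual step.
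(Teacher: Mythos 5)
Your proposal is correct and follows essentially the same route as the paper: both compute the frame coordinates of $Z_u$ and $Z_v$ via the structure equations (your expression for $Z_u$ matches the paper's exactly) and then test whether these derivatives lie in the tangent plane of the Lie quadric at $Z$. The only cosmetic difference is that you implement the tangency test with the polar bilinear form of \eqref{eq:LieQuadric2}, whereas the paper uses the determinant $[Q,Q_s,Q_t,Z_u]$ with $Q_s=\bar f_u$, $Q_t=\bar f_v$; the two tests are equivalent and yield the same conclusion up to a constant factor.
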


\begin{proof}
Observe that, at $s=\frac{A_v}{2A}$, $t=\frac{B_u}{2B}$,
$$
Q=\bar{f}_{uv}+\left( \frac{\omega H}{2}  \right)\bar{f}, \ \ Q_s=\bar{f}_u,\ \ Q_t=\bar{f}_v. 
$$
On the other hand
$$
Z_u=\frac{\omega_u}{\omega}\bar{f}_{uv}-\frac{\omega H}{2}\bar{f}_u+\frac{A_v}{\omega}\bar{f}_v+\frac{1}{2}\left( \omega_uH+\omega H_u\right)\bar{f}.
$$
Thus, denoting $\Delta=\left[\bar{f},\bar{f}_u,\bar{f}_v,\bar{f}_{uv}\right]$, we have 
$$
[Q,Q_s,Q_t,Z_u]=\left(\frac{\omega_uH}{2}-\frac{1}{2}(\omega_uH+\omega H_u)\right) \Delta=-\frac{\omega H_u}{2}\Delta.
$$
Similarly
$$
[Q,Q_s,Q_t,Z_v]=-\frac{\omega H_v}{2}\Delta,
$$
thus proving the proposition.
\end{proof}

From Proposition \ref{prop:DemoulinCAMC}, we conclude that $Z$ is a Demoulin transform of $f$  if and only if $f$ is CAMC. For a definition of the Demoulin transform, see (\cite{Ferapontov-Schief},\cite{Ferapontov},\cite{Sasaki}).

\subsection{Affine spheres}

In case $f$ is an affine sphere, then the four Demoulin transforms coincide with $Z$ given by Equation \eqref{eq:DemoulinTransform}, which says that $f$ is a Demoulin surface (\cite{Ferapontov-Schief},\cite{Ferapontov},\cite{Sasaki}).

Conversely, if $f$ is CAMC, then straightforward calculations show that the Demoulin transforms of $f$ are obtained by taking 
$$
s=\pm \frac{A_v}{2A}, \ \ t=\pm \frac{B_u}{2B}
$$
in Equation \eqref{eq:LieQuadric1}. Thus $f$ is Demoulin if and only if $A_v=B_u=0$, and these conditions imply that $f$ is an affine sphere. 

\subsection{Affine minimal surfaces}

In case $f$ is an affine minimal surface, then Equation \eqref{eq:DemoulinTransform} reduces to 
$$
Z=\bar{f}_{uv}=(f_{uv}:0)=(\xi:0). 
$$
Thus we may take the projective version of the affine normal vector $\xi$ as a representative of $Z$. Differentiating Equation \eqref{eq:Shape} with $H=0$ gives
$$
\xi_{uu}=\left(\frac{A_{uv}}{A_v}-\frac{2\omega_u}{\omega}\right) \xi_u + \frac{A_v}{\omega} \xi,
$$
which implies that the projective version of $\xi(\cdot,v)$ is a straight line. Similarly, the projective version of $\xi(u,\cdot)$
is also a straight line. Thus $Z$ is a Demoulin transform of $f$ whose $u$-curves and $v$-curves are straight lines. We conclude
that $f$ is a $Q$-surface. For a definition of $Q$-surface, see (\cite{McCarthy-Schief},\cite{Schief-Szeres}).


\section{Co-normal Map and Interpolations on Discrete Asymptotic Nets}

\subsection{Basic definitions and notations}

A discrete asymptotic net is a map $q:\mathcal{D}\subset\mathbb{Z}^2\to\mathbb{R}^3$ whose ``crosses are planar'', i.e., for each $(i,j)\in\mathcal{D}$, the five points $q(i,j)$, $q(i\pm 1,j)$ and $q(i,j\pm 1)$ are co-planar. Denote by $D^*$ the set of squares $(i+\tfrac{1}{2},j+\tfrac{1}{2})$ whose vertices $q(i,j)$, $q(i+1,j)$, $q(i,j+1)$ and $q(i+1,j+1)$ belong to $\mathcal{D}$. We shall assume that any pair of squares in $\mathcal{D}^*$ can be connected by a chain of adjacent squares, where by adjacent we mean that they have a common side. 

For each function $f:\mathcal{D}\to\mathbb{R}^k$, we shall denote 
$$
f_1(i+\tfrac{1}{2},j)=f(i+1,j)-f(i,j),\ \ \ f_2(i,j+\tfrac{1}{2})=f(i,j+1)-f(i,j).
$$
Similarly, for $f:\mathcal{D}^*\to\mathbb{R}^k$,
$$
f_1(i,j+\tfrac{1}{2})=f(i+\tfrac{1}{2},j+\tfrac{1}{2})-f(i-\tfrac{1}{2},j+\tfrac{1}{2}),\ \ \ f_2(i+\tfrac{1}{2},j)=f(i+\tfrac{1}{2},j+\tfrac{1}{2})-f(i+\tfrac{1}{2},j-\tfrac{1}{2}).
$$

For each square $(i+\tfrac{1}{2},j+\tfrac{1}{2})$, denote 
\begin{equation*}
\delta(i+\tfrac{1}{2},j+\tfrac{1}{2})=\left[ q_1(i+\tfrac{1}{2},j), q_2(i,j+\tfrac{1}{2}), q_{12}(i+\tfrac{1}{2},j+\tfrac{1}{2})\right],
\end{equation*}
where $[\cdot,\cdot,\cdot]$ denotes the determinant of three vectors in $\mathbb{R}^3$.  We say that the asymptotic net is non-degenerate if $\delta$ does not change sign. Along this paper, we shall be assuming that the asymptotic net is non-degenerate, and, without loss of generality, we may assume $\delta>0$. 
We shall then write
\begin{equation}\label{eq:DefineOmega}
\Omega^2(i+\tfrac{1}{2},j+\tfrac{1}{2})=\left[ q_1(i+\tfrac{1}{2},j), q_2(i,j+\tfrac{1}{2}), q_{12}(i+\tfrac{1}{2},j+\tfrac{1}{2})\right],
\end{equation}
and call $\Omega$ the affine metric of the discrete asymptotic net. For examples of singular discrete asymptotic nets, i.e., asymptotic nets where $\delta$ changes sign, see \cite{Vargas2022}.

\subsection{Co-normal map}

The co-normal vector field $\nu$ with respect
to a discrete asymptotic net $q$ is a vector-valued map defined at the vertices $(i, j)\in\mathcal{D}$
satisfying the discrete Lelieuvre’s equations
\begin{equation}\label{eq:Lelieuvre}
\nu(i,j)\times\nu(i+1,j)=q_1(i+\tfrac{1}{2},j),\ \ \nu(i,j)\times\nu(i,j+1)=-q_2(i,j+\tfrac{1}{2}).
\end{equation}

For a constant $\rho$, consider
\begin{equation*}\label{eq:BWRescaling}
\nu_{\rho}(i,j)=\rho\nu(i,j),\ \ i+j\ \ {\rm even}, \ \ \nu_{\rho}(i,j)=\rho^{-1}\nu(i,j),\ \ i+j \ \ {\rm odd}.
\end{equation*}
This operation is known as black-white re-scaling and it is easy to show that $\nu_{\rho}$ also satisfies Lelieuvre’s
equations, i.e., is a way to obtain another conormal vector field with
respect to the same $q$ net. Conversely, we can see that any conormal vector field
with respect to the asymptotic net $q$ is obtained from $\nu$ by a black-white
re-scaling. In particular $\nu$ is completely determined by its value at one vertex.

\begin{prop} A vector field $\nu(i,j)$ is the conormal vector field of an
asymptotic net $q(i, j)$ if and only if it defines a Moutard net, i.e.,
\begin{equation}\label{eq:Moutard}
\lambda^2(i+\tfrac{1}{2},j+\tfrac{1}{2})(\nu(i,j)+\nu(i+1,j+1))=\nu(i,j+1)+\nu(i+1,j),
\end{equation}
for some map $\lambda:\mathcal{D}^*\to\mathbb{R}_{+}^{*}$. Moreover, 
\begin{equation}\label{eq:OmegaLambdaNu}
\Omega(i+\tfrac{1}{2},j+\tfrac{1}{2})=\frac{1}{\lambda(i+\tfrac{1}{2},j+\tfrac{1}{2})} [\nu(i,j),\nu(i+1,j),\nu(i,j+1)].
\end{equation}
\end{prop}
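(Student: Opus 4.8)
The plan is to prove both implications at once by recognizing that the Moutard equation \eqref{eq:Moutard} is precisely the integrability (closure) condition for the Lelieuvre equations \eqref{eq:Lelieuvre}. Throughout I abbreviate the four conormals attached to the square $(i+\tfrac{1}{2},j+\tfrac{1}{2})$ by $a=\nu(i,j)$, $b=\nu(i+1,j)$, $c=\nu(i,j+1)$, $d=\nu(i+1,j+1)$.

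First I would establish the closure computation. A map $q$ with prescribed edge increments exists and is single-valued (using that $\mathcal{D}^*$ is connected) if and only if the oriented edges of each elementary square sum to zero. Writing these four edges through \eqref{eq:Lelieuvre}, $q_1=a\times b$ and $q_2=c\times a$ and their shifts, and summing around the square, I expect the boundary sum to telescope to $(a+d)\times(b+c)$. Hence the closure condition is $(a+d)\times(b+c)=0$, i.e. $a+d\parallel b+c$. For the direction ``conormal $\Rightarrow$ Moutard'', the net $q$ already exists, so this parallelism holds and defines a scalar $\mu$ by $b+c=\mu(a+d)$; this is \eqref{eq:Moutard} with $\mu=\lambda^2$. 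For the reverse direction, \eqref{eq:Moutard} gives $b+c=\lambda^2(a+d)$, hence the parallelism, hence closure, so $q$ is well defined; and since every edge leaving a vertex $(i,j)$ has the form $\nu(i,j)\times(\cdot)$, all of them are orthogonal to $\nu(i,j)$, so each cross lies in a common plane and $q$ is a genuine asymptotic net with conormal $\nu$.

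For the \emph{moreover} part I would substitute $q_1=a\times b$, $q_2=c\times a$ and $q_{12}=c\times d-a\times b$ into \eqref{eq:DefineOmega}. The summand $a\times b$ coming from $q_{12}$ contributes a determinant with a repeated column and so drops out, leaving $\Omega^2=[a\times b,\,c\times a,\,c\times d]$. Using $(c\times a)\times(c\times d)=[c,a,d]\,c$ and contracting gives $\Omega^2=[c,a,d]\,[a,b,c]$; substituting $d=\mu^{-1}(b+c)-a$ from \eqref{eq:Moutard} collapses $[c,a,d]$ to $\mu^{-1}[a,b,c]$, so that $\Omega^2=\mu^{-1}[a,b,c]^2$. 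Taking positive square roots (recall $\Omega>0$ by the standing orientation convention) yields \eqref{eq:OmegaLambdaNu}.

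The main obstacle, and the point requiring the most care, is the positivity of $\lambda$. The closure/Moutard equivalence by itself only delivers parallelism of $a+d$ and $b+c$, hence a real $\mu$ of a priori unknown sign; what upgrades this to $\mu>0$ (so that $\lambda=\sqrt{\mu}$ is a genuine positive weight, and so that $\mu$ is even well defined, i.e. $a+d\neq 0$) is the non-degeneracy hypothesis $\Omega^2>0$ fed through the identity $\Omega^2=\mu^{-1}[a,b,c]^2$. Thus the determinant computation is not merely the ``moreover'' claim but is also what closes the first implication; the remaining work is routine sign-bookkeeping in the cross and triple products.
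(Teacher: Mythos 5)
Your proof is correct and follows essentially the same route as the paper: your closure condition $(a+d)\times(b+c)=0$ is exactly the paper's computation that $(q_1)_2=(q_2)_1$, and the positivity of the Moutard factor is extracted, just as in the paper, from the identity $\Omega^2=\alpha^{-1}\left[\nu(i,j),\nu(i+1,j),\nu(i,j+1)\right]^2$ combined with non-degeneracy. The only differences are cosmetic: a slightly different factorization of the triple product for $\Omega^2$, and some extra (welcome) detail on single-valuedness of $q$ and planarity of the crosses.
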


\begin{proof}
Observe that
$$
(q_2)_1(i+\tfrac{1}{2},j+\tfrac{1}{2})=-\nu(i+1,j)\times\nu(i+1,j+1)+\nu(i,j)\times\nu(i,j+1),
$$
$$
(q_1)_2(i+\tfrac{1}{2},j+\tfrac{1}{2})=\nu(i,j+1)\times\nu(i+1,j+1)-\nu(i,j)\times\nu(i+1,j),
$$
and so $(q_1)_2=(q_2)_1$ if and only if 
$$
(\nu(i+1,j+1)+\nu(i,j))\times (\nu(i+1,j)+\nu(i,j+1))=0, 
$$
thus proving that 
$$
\alpha(i+\tfrac{1}{2},j+\tfrac{1}{2})(\nu(i,j)+\nu(i+1,j+1))=\nu(i,j+1)+\nu(i+1,j), 
$$
for some $\alpha:(\mathbb{Z}^2)^*\to\mathbb{R}\setminus\{0\}$. Now observe that, from Equations \eqref{eq:DefineOmega} and \eqref{eq:Lelieuvre}, we obtain
$$
\Omega^2=[\nu(i,j),\nu(i+1,j),\nu(i,j+1)][\nu(i,j),\nu(i+1,j),\nu(i+1,j+1)]
$$
and so
$$
\Omega^2=\frac{1}{\alpha} [\nu(i,j),\nu(i+1,j),\nu(i,j+1)]^2
$$
We conclude that $\alpha>0$ and so we can write $\alpha=\lambda^2$ with $\lambda>0$, thus proving the proposition.
\end{proof}




\subsection{Interpolations at a single quadrangle}

In order to get smaller formulas, given a quadrangle $(i+\tfrac{1}{2},j+\tfrac{1}{2})\in \mathcal{D}^*$, we shall use the notation 
$$
A=q(i,j),\ \ B=q(i+1,j), \ \ C=q(i,j+1), \ \ D=q(i+1,j+1).
$$
Given four non coplanar points $A$, $B$, $C$ and $D$ in $\mathbb{R}^3$, we are looking for a quadric patch that contains the segments $AB$, $CD$, $AC$ and $BD$ and its convex hull coincides with the tetrahedron $ABCD$.

\begin{lem} 
There exists a $1$-parameter family of ruled quadrics passing through the edges of $ABCD$, with patches in the convex hull of the quadrangle. For the standard quadrangle $A=(0, 0, 0)$, $B=(1, 0, 0)$, $C=(0, 1, 0)$, $D=(1, 1, 1)$, the cartesian equation of this family is 
\begin{equation}\label{eq:BasicCartesianHyperboloid}
z(1+z-x-y)=(a+1)(z-y)(z-x), \ \ a>-1.
\end{equation}
\end{lem}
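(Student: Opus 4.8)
The plan is to prove both claims of the lemma — the existence of a one-parameter family for a general quadrangle, and the explicit Cartesian form for the standard quadrangle — by first reducing the general case to the standard one via an affine map, and then directly analyzing the standard case. First I would observe that any four non-coplanar points $A,B,C,D$ can be carried by a unique affine isomorphism of $\mathbb{R}^3$ to the standard configuration $A=(0,0,0)$, $B=(1,0,0)$, $C=(0,1,0)$, $D=(1,1,1)$; since affine maps send quadrics to quadrics, send ruled quadrics to ruled quadrics, preserve the property of containing the four edges, and preserve convex hulls, it suffices to treat the standard quadrangle. This affine-invariance step is the natural reduction and costs nothing once stated.

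For the standard quadrangle I would write a general quadric as the zero set of a quadratic polynomial $P(x,y,z)$ with ten coefficients and impose the conditions that it contain the four edges $AB$, $CD$, $AC$ and $BD$. Each edge is a segment lying on a coordinate-type line, so substituting its parametrization into $P$ yields a polynomial in one variable that must vanish identically; each such edge thus gives three linear conditions on the coefficients of $P$. Carrying out these substitutions for all four edges cuts the ten-dimensional coefficient space down, and I expect the residual freedom — after also normalizing the overall scale — to be exactly one parameter, which I would christen $a$. Rearranging the resulting family into the stated form $z(1+z-x-y)=(a+1)(z-y)(z-x)$ is then a matter of bookkeeping; one checks directly that both sides vanish on each of the four edges (on $AB$ and $AC$ one has $z=0$ together with, respectively, $y=0$ or $x=0$, and on $CD$, $BD$ the analogous coincidences force both sides to zero).

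Next I would verify the two qualitative assertions: that these quadrics are ruled, and that the relevant patch lies in the convex hull of $ABCD$ precisely when $a>-1$. Ruledness I would argue by exhibiting the rulings explicitly — for fixed values of the linear combinations appearing on the right-hand side the equation becomes linear, so the quadric carries the two families of straight lines characteristic of a hyperboloid or hyperbolic paraboloid, placing it among the doubly-ruled (hence ruled) quadrics. For the convex-hull and sign condition, I would parametrize the patch bounded by the four edges and check that the constraint $a>-1$ is exactly what keeps the quadric of hyperbolic (saddle) type with its patch trapped inside the tetrahedron, degenerating at $a=-1$; values $a\le-1$ either change the signature or push the surface outside the hull.

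The main obstacle I anticipate is not the algebra of imposing the edge conditions, which is routine, but pinning down the precise geometric meaning of the parameter range $a>-1$ — that is, rigorously showing the patch stays within the convex hull exactly for these values and identifying what goes wrong at the boundary $a=-1$. This requires a careful analysis of the sign of $P$ on the interior of the tetrahedron and a description of the degeneration, rather than a mere solution of linear equations, and so it is where the real care must be invested.
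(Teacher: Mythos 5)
Your plan is essentially the paper's proof: reduce to the standard quadrangle, impose the edge conditions on a general ten-coefficient quadric to cut it down (the paper gets $A=B=G=H=J=0$, $E=F=a$, $C=-a$, $D=-I-a$, then normalizes $I=1$), and determine the admissible range of $a$ from the convex hull. The one step you flag as delicate is in fact immediate in the paper: the tetrahedron is $\set{z\geq 0,\ 1+z-x-y\geq 0,\ z-y\leq 0,\ z-x\leq 0}$, so on its interior the left-hand side of \eqref{eq:BasicCartesianHyperboloid} is positive while $(z-y)(z-x)$ is also positive, forcing $a+1>0$, with $a=-1$ giving the degenerate pair of planes $z(1+z-x-y)=0$.
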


\begin{proof}
Consider a general quadric given by equation
$$
Ax^2+By^2+Cz^2+Dxy+Exz+Fyz+Gx+Hy+Iz=J
$$
If we impose the conditions that it contains the edges of $ADCD$ we conclude that
$$
A=B=G=H=J=0, \ E=F=a,\ C=-a, \ D=-I-a.
$$
If $I=0$, then we obtain the pair of planes $(z-y)(z-x)=0$. If $I\neq 0$, we may assume $I=1$. Then we obtain the family 
\eqref{eq:BasicCartesianHyperboloid}. 
If $a=-1$, then we obtain the pair of planes $z(1+z-x-y)=0$. The convex hull of $ABCD$ is defined by the equations
$$
z\geq 0,\ \ 1+z-x-y\geq 0, \ \ z-y\leq 0, \ \ z-x\leq 0. 
$$
Thus for the quadric \eqref{eq:BasicCartesianHyperboloid} to contain a patch in the convex hull of $ABCD$ we must have $a>-1$, thus proving the lemma.  
\end{proof}

Considering the standard quadrangle $A=(0, 0, 0)$, $B=(1, 0, 0)$, $C=(0, 1, 0)$, $D=(1, 1, 1)$, given $a>-1$, define the interpolator 
\begin{equation}\label{StandardInterpolator}
\phi(u,v)=\frac{1}{1+auv}\left( u+auv, v+auv, (1+a)uv \right),\ \ 0\leq u,v\leq 1,
\end{equation}
(Figure \ref{Fig:StandardInterpolator}). Let
\begin{equation*}\label{eq:EtaNuStandard}
\nu(A)=\eta(0,0), \nu(B)=\eta(1,0), \nu(C)=\eta(0,1), \nu(D)=\eta(1,1),
\end{equation*}
where $\eta$ denote the affine co-normal of the interpolator \eqref{StandardInterpolator}.

\begin{figure}[!htb]
 \includegraphics[width=.48\linewidth]{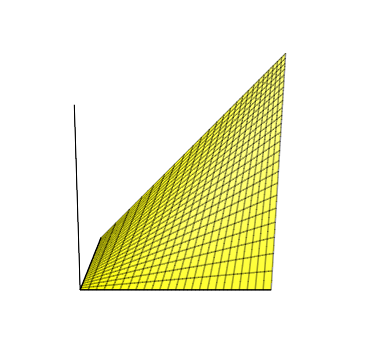}
\caption{\small Standard interpolating quadric defined by Equation \eqref{StandardInterpolator} with $a=1$.}\label{Fig:StandardInterpolator}
\end{figure}

\begin{lem}\label{lemma:BasicAffineQuadric}
Consider the interpolator given by Equation \eqref{StandardInterpolator}. The center of the quadric is the point $O=\tfrac{1}{2}(1, 1,\tfrac{a+1}{a})$ and its affine mean curvature is $H=-\frac{2a}{\sqrt{a+1}}$.
Moreover, the co-normal $\nu$ satisfies the discrete Lelieuvre's equations \eqref{eq:Lelieuvre} in the standard quadrangle $ABCD$. It also satisfies the discrete Moutard equation \eqref{eq:Moutard} with 
\begin{equation*}\label{eq:LambdaA}
\lambda^2=a+1.
\end{equation*}
\end{lem}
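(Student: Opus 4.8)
The plan is to read off all four assertions from the explicit parametrization \eqref{StandardInterpolator} and the quadric \eqref{eq:BasicCartesianHyperboloid} on which it lies. First I would expand \eqref{eq:BasicCartesianHyperboloid} into the single quadratic $F(x,y,z)=(a+1)xy+az^2-axz-ayz-z$. The center $O$ is the unique critical point of $F$, i.e. the solution of $\nabla F=0$; this is the linear system $(a+1)y=az$, $(a+1)x=az$, $2az-ax-ay=1$, whose solution is $x=y=\tfrac12$, $z=\tfrac{a+1}{2a}$, giving $O=\tfrac12(1,1,\tfrac{a+1}{a})$ as claimed.

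For the affine mean curvature I would use that a nondegenerate quadric is an affine sphere, so the shape operator in \eqref{eq:Shape} is $H$ times the identity and the Blaschke normal satisfies $\xi=-H(\phi-O)$ with $H$ constant. Together with $\phi_{uv}=\omega\xi$ and $\omega^2=[\phi_u,\phi_v,\phi_{uv}]$ this gives $\phi_{uv}=-H\omega(\phi-O)$, which I evaluate at the corner $(u,v)=(0,0)$. There $\phi=A=(0,0,0)$, $\phi_u=(1,0,0)$, $\phi_v=(0,1,0)$ and $\phi_{uv}=(a,a,1+a)$; one checks $\phi_{uv}=-2a(\phi-O)$ and $[\phi_u,\phi_v,\phi_{uv}]=a+1$, so $H\omega=2a$ and $\omega^2=a+1$. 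Since $H$ is constant it suffices to evaluate at one corner, so the magnitude $|H|=\tfrac{2a}{\sqrt{a+1}}$ is immediate; the sign is settled by noting that, to make $\Omega>0$ in \eqref{eq:OmegaLambdaNu}, the orientation forces $\omega=-\sqrt{a+1}$, which turns $H\omega=2a$ into $H=-\tfrac{2a}{\sqrt{a+1}}$. (Alternatively, computing $\omega(u,v)$ and the cubic-form coefficients and substituting into the compatibility equation \eqref{eq:Compatibility1} determines $H$, including its sign, intrinsically.)

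For the co-normal I would compute $\eta=\tfrac1\omega(\phi_u\times\phi_v)$ at the four corners. The one point needing care is that $\omega$ is not constant: $\omega^2=a+1$ at $A,B,C$ but $\omega^2=(a+1)^{-3}$ at $D$, so the normalization at $D$ must be computed from $\phi_{uv}(1,1)$ separately. Up to the global sign discussed above, this produces $\nu(A)=\tfrac1{\sqrt{a+1}}(0,0,1)$, $\nu(B)=\sqrt{a+1}\,(0,-1,1)$, $\nu(C)=\sqrt{a+1}\,(-1,0,1)$ and $\nu(D)=\tfrac1{\sqrt{a+1}}(-1,-1,1)$. Verifying \eqref{eq:Lelieuvre} on the four edges is then four cross-product checks, namely $\nu(A)\times\nu(B)=(1,0,0)=B-A$, $\nu(A)\times\nu(C)=(0,-1,0)=-(C-A)$, $\nu(C)\times\nu(D)=(1,0,1)=D-C$ and $\nu(B)\times\nu(D)=(0,-1,-1)=-(D-B)$; note these are insensitive to the global sign of $\nu$.

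Finally, substituting the same four values into the Moutard equation \eqref{eq:Moutard} gives $\nu(A)+\nu(D)=\tfrac1{\sqrt{a+1}}(-1,-1,2)$ and $\nu(B)+\nu(C)=\sqrt{a+1}\,(-1,-1,2)$, so that $\lambda^2=a+1$ follows at once. I expect the main obstacle to be the bookkeeping at the vertex $D$: the metric factor $\omega$ degenerates there to $(a+1)^{-3/2}$, and it is precisely getting this normalization right that makes the four Lelieuvre identities and the single value $\lambda^2=a+1$ emerge simultaneously. Fixing the sign of $H$ is the only other delicate point, and it is a matter of orientation convention rather than computation.
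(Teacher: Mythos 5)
Your computations of the center, of the corner values of $\phi_u$, $\phi_v$, $\phi_{uv}$, of the four co-normals, and the verification of Lelieuvre's and Moutard's equations are all correct and agree with the paper's. Your route is genuinely different in flavor: the paper derives closed-form expressions for $\omega(u,v)$, $\xi(u,v)$ and the co-normal $\eta(u,v)$ over the whole patch, namely $\omega=\sqrt{a+1}\,(1+auv)^{-2}$ and $\eta=\tfrac{1}{(1+auv)\sqrt{a+1}}\left(-(a+1)v,-(a+1)u,1+au+av-auv\right)$, and then specializes to the corners, whereas you extract the center from $\nabla F=0$ of the implicit equation and do everything else by pointwise evaluation at the four vertices. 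Both are legitimate; the paper's global formulas make the ``affine sphere centered at $O$'' structure visible at every point at once and dispose of your worry about the varying normalization at $D$ automatically, while your version minimizes computation.

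The one genuine gap is your sign argument for $H$. From $\phi_{uv}(0,0)=(a,a,1+a)=-2a(\phi(0,0)-O)$ and $[\phi_u,\phi_v,\phi_{uv}](0,0)=a+1$ you correctly obtain $\xi=-\tfrac{2a}{\sqrt{a+1}}(\phi-O)$ with $\omega=+\sqrt{a+1}$ at the corner. Your claim that the orientation ``forces $\omega=-\sqrt{a+1}$'' to make $\Omega>0$ does not hold up: $\Omega$ is the discrete metric of the quadrangle (equal to $1$ for the standard quadrangle) and is unrelated to the sign of the smooth metric $\omega$ of the interpolating patch, and the paper's own formula has $\omega>0$ everywhere. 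What is actually at stake is a convention: the paper reads $H$ off as the coefficient in $\xi=H(\phi-O)$, giving $H=-\tfrac{2a}{\sqrt{a+1}}$, whereas the conventions of Section 2 (equation \eqref{eq:Shape}, i.e.\ $\xi=-H(f-O)$ for a proper affine sphere, or equivalently the compatibility equation \eqref{eq:Compatibility1} with $A=B=0$) yield $+\tfrac{2a}{\sqrt{a+1}}$. In particular, the ``alternative'' you propose via \eqref{eq:Compatibility1} would produce the opposite sign to the one you claim. So your argument establishes the magnitude of $H$, but the sign should be settled by stating which convention is in force, not by flipping the sign of $\omega$.
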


\begin{proof}
Straightforward calculations show that $L=N=0$, which means that $(u,v)$ are asymptotic
coordinates, and 
$$
M =\frac{a+1}{(1+auv)^{4}}, \ \ \omega(u, v) = \frac{\sqrt{a+1}}{\left(1+auv\right)^2}.
$$
Since
$$
\xi(u,v)=-\frac{2a}{\sqrt{a+1}}\left(   \phi-\frac{1}{2}\left(1,1,\frac{a+1}{a}\right) \right),
$$
the quadric has center at the
point $O$ and affine mean curvature $H$. Straightforward calculations show that 
the affine co-normal $\eta$ is given by
$$
\eta(u,v)=\frac{1}{(1+auv)\sqrt{a+1}}(-(a+1)v,-(a+1)u,1+au+av-auv),
$$
which implies that
$$
\nu(A)=\tfrac{1}{\sqrt{a+1}}\left(0,0,1\right), \nu(B)=\sqrt{a+1}\left( 0, -1, 1  \right), 
$$
$$
\nu(C)=\sqrt{a+1}\left( -1, 0, 1  \right), \nu(D)=\tfrac{1}{\sqrt{a+1}}\left(-1,-1,1\right).
$$
It is now easy to verify the discrete Lelieuvre's and Moutard's formulas in the standard quadrangle.
\end{proof}

For a generic quadrangle $ABCD$, consider the affine map $T$ composed by a translation of $A$ with the
linear map that takes 
$$
(1, 0, 0)\to B-A, \ (0, 1, 0)\to C-A, \ (0, 0, 1)\to D+A-B-C.
$$
Then a parametric equation in asymptotic coordinates of the patch is
\begin{equation}\label{GeneralInterpolator}
T\circ\phi(u,v)=A+\frac{1}{1+auv}\left(  u(B-A)+v(C-A)+uv((1+a)D+(1-a)A-B-C)\right),
\end{equation}
where $a>-1$. Since $\det(T)=\Omega^2$, the affine mean curvature of the interpolator \eqref{GeneralInterpolator} is given by 
\begin{equation*}\label{eq:AMC}
H=-\frac{2a}{\sqrt{1+a}\ \Omega}.
\end{equation*}
Moreover, also in the case of a general quadrangle, the discrete co-normal $\nu$ satisfies the discrete Lelieuvre's equations \eqref{eq:Lelieuvre} and the discrete Moutard equation \eqref{eq:Moutard} with $\lambda^2=a+1$. 


It is useful to understand what happens with the parameter $a$ if we change the roles of $A,B,C$ and $D$. If we want to obtain the same 
interpolator quadric and the same orientation, we can consider only the ordered permutations 
$(B,D,A,C)$, $(C,A,D,B)$ and $(D,C,B,A)$. 

\begin{lem}
In order to obtain the same quadric interpolator, the parameter $a$ of Equation \eqref{GeneralInterpolator} must be the same for the permutation $(D,C,B,A)$. For the permutations $(B,D,A,C)$ and $(C,A,D,B)$, the correct parameter is 
\begin{equation*}
\bar{a}=-\frac{a}{a+1}.
\end{equation*}
\end{lem}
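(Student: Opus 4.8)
The plan is to realize each admissible relabeling as an explicit affine symmetry of $\mathbb{R}^3$ carrying the standard quadrangle to the permuted one, and then to read off $\bar a$ by substituting that symmetry into the Cartesian equation \eqref{eq:BasicCartesianHyperboloid} and comparing with the canonical form. Conceptually, the three permutations are precisely the orientation-preserving symmetries of the boundary $4$-cycle $A\to B\to D\to C$: the half-turn $(D,C,B,A)$ fixes each of the two rulings of the hyperboloid (the $u$-curves remain $u$-curves), whereas the quarter-turns $(B,D,A,C)$ and $(C,A,D,B)$ interchange the two families of asymptotic lines. Since the parameter $a$ describes the quadric relative to an \emph{ordered} pair of rulings, one expects $a$ to be preserved by the half-turn and to be replaced by a reciprocal value under the family-swapping quarter-turns.

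First I would treat $(D,C,B,A)$. In standard coordinates consider the affine involution $\sigma(x,y,z)=(1-x,\,1-y,\,1+z-x-y)$, which sends $A\leftrightarrow D$ and $B\leftrightarrow C$. A direct check gives $z'-x'=z-y$, $z'-y'=z-x$ and $z'(1+z'-x'-y')=z(1+z-x-y)$, so \eqref{eq:BasicCartesianHyperboloid} is carried to itself with the \emph{same} coefficient $a+1$; hence $\bar a=a$. For $(B,D,A,C)$ I would instead use the affine map $\rho(x,y,z)=(1-y,\,x,\,x-z)$, which sends the ordered standard quadrangle $(A,B,C,D)$ to $(B,D,A,C)$ and has $\det\rho=-1$, the sign recording the exchange of the two asymptotic families. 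Substituting this coordinate change into the Cartesian equation of the $\bar a$-quadric and regrouping, the invariant factor $z(1+z-x-y)$ reappears while the coefficient becomes $1/(\bar a+1)$; matching with the $a$-quadric forces $\bar a+1=1/(a+1)$, i.e. $\bar a=-a/(a+1)$. The remaining permutation $(C,A,D,B)$ is the inverse quarter-turn and yields the same value by the identical computation.

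The only genuinely delicate point is bookkeeping: one must verify that each affine map sends the \emph{ordered} standard quadrangle to the correctly ordered permuted quadrangle and respects the convex-hull data, so that the transformed equation is compared against the true canonical form \eqref{eq:BasicCartesianHyperboloid} for the parameter $\bar a$ and not a rescaled variant. As an independent check one can argue through the Moutard coefficient $\lambda^2=a+1$ of Lemma \ref{lemma:BasicAffineQuadric}: the half-turn preserves the diagonal pairing $\{A,D\}$, $\{B,C\}$, leaving $\lambda^2$ unchanged, while a quarter-turn swaps the two diagonals in the relation $\lambda^2(\nu(A)+\nu(D))=\nu(B)+\nu(C)$ and hence replaces $\lambda^2$ by $1/\lambda^2$. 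I expect the main obstacle to be tracking the sign reversal of $[\phi_u,\phi_v,\phi_{uv}]$ produced by the family exchange in this second route; this is exactly the information encoded by $\det\rho=-1$, which is why the Cartesian-equation route is the safer one to carry out in full.
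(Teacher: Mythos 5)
Your proof is correct, but it follows a genuinely different route from the paper's. The paper stays inside the parametric framework: for the permutation $(B,D,A,C)$ it writes out the interpolator \eqref{GeneralInterpolator} with parameter $\bar a$ explicitly, namely $\psi(u,v)=\tfrac{1}{1+\bar a uv}(1-v,\,u+\bar a uv,\,u-uv)$, and then exhibits the projective reparametrization $v=1-t$, $u=\tfrac{s(1+a)}{1+as}$ under which $\psi$ becomes the original patch $\phi$ precisely when $\bar a=-a/(a+1)$; the other cases are declared analogous. You instead work with the implicit Cartesian equation \eqref{eq:BasicCartesianHyperboloid} and realize each relabeling as an affine self-map of the tetrahedron ($\sigma$ for the half-turn, $\rho(x,y,z)=(1-y,x,x-z)$ for the quarter-turn); I checked that $F_a\circ\rho$ is proportional to $F_{\bar a}$ exactly when $(a+1)(\bar a+1)=1$, and since this relation is symmetric in $a$ and $\bar a$ the inverse quarter-turn $(C,A,D,B)$ comes for free. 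Your route has two advantages: it avoids having to guess the reparametrization, and since $\sigma$ and $\rho$ map the tetrahedron onto itself the patch (convex-hull) condition is automatically preserved — a point you rightly flag and which the paper's projective substitution leaves implicit. The paper's route, on the other hand, records how the asymptotic parameters themselves transform, which is reused later. Your secondary check via the Moutard coefficient $\lambda^2=a+1$ is also sound (the affine co-normal values at the vertices are determined by the patch alone, so swapping the diagonals in \eqref{eq:Moutard} forces $\bar\lambda^2=\lambda^{-2}$), and it matches the remark the paper makes immediately after the lemma. One caution: your heuristic that $\det\rho=-1$ ``records the exchange of the asymptotic families'' is fine as intuition, but note that this sign reversal also flips the quantity $\delta$ of \eqref{eq:DefineOmega} for the relabeled quadrangle, which is exactly why $H$ changes sign under the quarter-turns; this does not affect the identification of $\bar a$, which concerns only the quadric as a point set.
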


\begin{proof}
We may assume that $ABCD$ is the standard quadrangle. Thus $\phi$ is given by Equation \eqref{StandardInterpolator}. We shall consider the permutation $(B,D,A,C)$, the other cases being analogous. For this permutation, Equation \eqref{GeneralInterpolator} gives
$$
\psi(u,v)=T\circ\phi(u,v)=\frac{1}{1+\bar{a}uv}\left(1-v, u+\bar{a}uv,u-uv \right).
$$
Consider the projective change of coordinates 
$$
v=1-t,\ \ u=\frac{s(1+a)}{1+as}. 
$$
Then 
$$
\psi(s,t)=\frac{1}{1+ast}\left( t(1+as), s(1+at),(1+a)st\right),
$$
thus proving the lemma.
\end{proof}

To conclude this section, we observe that the parameter $\lambda=\sqrt{a+1}$ remains the same for the permutation  
$(D,C,B,A)$, but changes to $\bar{\lambda}=\lambda^{-1}$ for the permutations $(B,D,A,C)$ and $(C,A,D,B)$. Similarly, the affine mean curvature remains the same for the permutation  
$(D,C,B,A)$, but changes to $\bar{H}=-H$ for the permutations $(B,D,A,C)$ and $(C,A,D,B)$. In fact,
$$
\bar{H}=-\frac{2\bar{a}}{\sqrt{\bar{a}+1}\Omega}=\frac{2a\sqrt{1+a}}{(a+1)\ \Omega}=-H.
$$

\subsection{Compatible interpolations at a pair of adjacent quadrangles}

We say that a pair of interpolating quadrics at adjacent quadrangles are {\it compatible} if the tangent plane of both quadrics coincide at the common edge. 

Consider a pair of adjacent quadrangles $ABCD$ and $ACEF$. We may assume that
\begin{equation}\label{StandardPair}
A=(0,0,0), B=(1,0,0), C=(0,1,0), D=(1,1,1), E=(x_1,y_1,0), F=(x_2,y_2,x_2),
\end{equation}
for some real numbers $x_1,y_1,x_2,y_2$. $E$ and $F$ were chosen
in a way that the crosses are planar at $A$ and $C$. Moreover, the non-degeneracy condition implies that both 
$x_1$ and $x_2$ are strictly negative (see Figure \ref{Fig:Rede1}). 

\begin{figure}[!htb]
 \includegraphics[width=.48\linewidth]{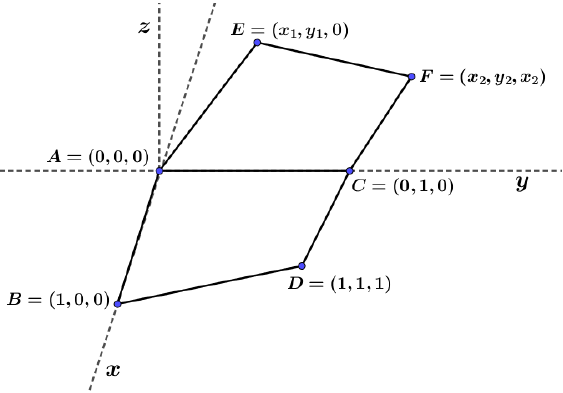}
\caption{\small Pair of adjacent quadrangles defined by vertices \eqref{StandardPair}.}\label{Fig:Rede1}
\end{figure}

A general interpolating quadric for $ABCD$ is given by Equation \eqref{StandardInterpolator}. Denote by $b$ the parameter of the quadric interpolator at the quadrangle $EAFC$. In order to simplify the calculations, we shall consider the non-standard permutation $ACEF$, and so the corresponding parameter is $\bar{b}$. 
With this permutation, Equation \eqref{GeneralInterpolator} becomes
\begin{equation}\label{eq:Psi}
\psi(s,t)=\frac{1}{1+\bar{b}st}\left( tx_1+st((1+\bar{b})x_2-x_1) , ty_1 +s+st((1+\bar{b})y_2-y_1-1), st (1+\bar{b})x_2 \right).
\end{equation}

\begin{figure}[!htb]
 \includegraphics[width=.48\linewidth]{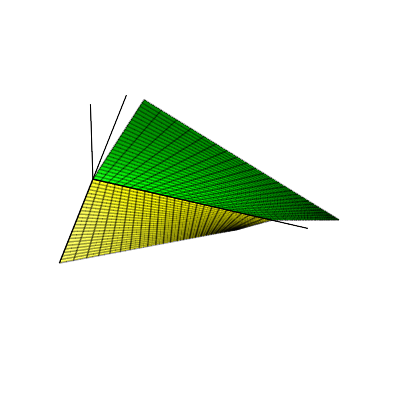}
\caption{\small Compatible quadric interpolators at adjacent quadrangles defined by Equations \eqref{StandardInterpolator} and \eqref{eq:Psi} with $a=1$, $x_1=-1$, $x_2=-0.9$, $y_1=0.1$, $y_2=1.2$, $\bar{b}=1.22$.}\label{Fig:StandardPair}
\end{figure}

Along the paper, we shall often use this pair of quadrangles $ABCDEF$ given by \eqref{StandardPair} together with interpolating quadrics $\phi$ and $\psi$ given by Equations \eqref{StandardInterpolator} and \eqref{eq:Psi} for making calculations (Figure \ref{Fig:StandardPair}).

\begin{lem}\label{lemma:ConditionCompatibility}
The interpolating quadrics $\phi$ and $\psi$ given by Equations \eqref{StandardInterpolator} and \eqref{eq:Psi} of the adjacent quadrangles $ABCDEF$ given by \eqref{StandardPair} are compatible along the common edge $AC$ if and only if
\begin{equation}
(1+a)(1+b)x_1=x_2.
\end{equation}
\end{lem}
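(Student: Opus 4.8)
The plan is to reduce the coincidence of tangent planes along $AC$ to a single determinant condition, exploiting that both parametrizations contain the edge as a coordinate line. First I would locate the edge $AC$ in each chart: in $\phi$ it is the curve $u=0$, since $\phi(0,v)=(0,v,0)$ runs from $A$ to $C$; in $\psi$ it is the curve $t=0$, since $\psi(s,0)=(0,s,0)$. Hence a common point of the edge is $(0,w,0)$, attained at $v=w$ in the first chart and $s=w$ in the second, so matching of points is automatic and only the tangent planes must be compared.

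Next I would compute the tangent vectors. Differentiating \eqref{StandardInterpolator} at $u=0$ gives
$$
\phi_v(0,w)=(0,1,0),\qquad \phi_u(0,w)=\big(1+aw,\,aw(1-w),\,(1+a)w\big),
$$
and differentiating \eqref{eq:Psi} at $t=0$ gives
$$
\psi_s(w,0)=(0,1,0),\qquad \psi_t(w,0)=\big(x_1(1-w)+(1+\bar{b})wx_2,\;\ast,\;(1+\bar{b})wx_2\big),
$$
where $\ast$ denotes a second coordinate that will turn out to be irrelevant. The crucial observation is that the edge-direction vectors coincide, $\phi_v(0,w)=\psi_s(w,0)=(0,1,0)$. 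Thus each tangent plane is spanned by $(0,1,0)$ together with a transversal vector ($\phi_u$ for $\phi$, $\psi_t$ for $\psi$). Since the two planes already share the direction $(0,1,0)$ and the two surfaces share the point $(0,w,0)$, the planes coincide there if and only if $\phi_u(0,w)$, $(0,1,0)$ and $\psi_t(w,0)$ are linearly dependent.

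I would then evaluate $\det[\phi_u(0,w),(0,1,0),\psi_t(w,0)]$. Because the middle column is $(0,1,0)$, expanding along it collapses the determinant to $a_1c_3-a_3c_1$, where $(a_1,a_2,a_3)=\phi_u(0,w)$ and $(c_1,c_2,c_3)=\psi_t(w,0)$; in particular the unknown second coordinate $\ast$ of $\psi_t$ drops out. A short computation and factorization yield that this determinant equals $w(1-w)\big[(1+\bar{b})x_2-(1+a)x_1\big]$. Since $w(1-w)$ is not identically zero on $[0,1]$, the planes agree along the whole edge if and only if $(1+\bar{b})x_2=(1+a)x_1$. Finally, substituting $1+\bar{b}=(1+b)^{-1}$, which follows from $\bar{b}=-b/(b+1)$, converts this into $(1+a)(1+b)x_1=x_2$, as claimed.

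The main obstacle is conceptual rather than computational: recognizing that the shared edge direction reduces plane-coincidence to the vanishing of a single determinant, and that this determinant must be required to vanish for \emph{all} $w$, not merely at one point. The factor $w(1-w)$ makes this transparent, for the endpoints $w=0,1$ correspond to the shared vertices $A$ and $C$, where the condition is satisfied automatically, so the genuine content lives at the interior edge points and is captured entirely by the bracketed factor. The remaining work is the routine differentiation, the algebraic factorization, and the bookkeeping translation between $\bar{b}$ and $b$.
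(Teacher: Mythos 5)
Your proof is correct and follows essentially the same route as the paper: identify the edge $AC$ as $u=0$ in one chart and $t=0$ in the other, match the parameters along it, and require the transversal tangent vector $\psi_t$ to lie in the tangent plane of $\phi$ at each edge point, which collapses to the single condition $(1+\bar{b})x_2=(1+a)x_1$. The only difference is cosmetic --- the paper writes the tangent plane in homogeneous coordinates and substitutes $\psi_t$ into its equation, whereas you work affinely and express the same membership condition as the vanishing of a $3\times 3$ determinant, factoring out $w(1-w)$.
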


\begin{proof}
In order to make calculations simpler, consider homogeneous coordinates 
$(X_1,X_2,X_3,X_4)$. Then
$$
\phi(u,v)=\left( u+auv, v+auv, (1+a)uv, 1+auv \right).
$$
At a point of the segment $AC$, we have
\begin{equation}\label{eq:PhiU}
\phi_v(0,v)=(0,1,0,0),\ \ \phi_u(0,v)=\left( 1+av, av, (1+a)v, av \right).
\end{equation}
Thus the equation of the tangent plane is
\begin{equation}\label{eq:TangentPlaneEdge}
(1+a)vX_1=(1+av)X_3.
\end{equation}
In homogeneous coordinates, we can write Equation \eqref{eq:Psi} as
$$
\psi(s,t)=\left( tx_1+st((1+\bar{b})x_2-x_1) , ty_1 +s+st((1+\bar{b})y_2-y_1-1), st (1+\bar{b})x_2, 1+\bar{b}st \right).
$$
At the segment $AC$, $t=0$ and $\psi(s,0)=(0,s,0,1)$. Thus $s=v$. Moreover
\begin{equation}\label{eq:PsiT}
\psi_t(s,0)=\left( x_1+s((1+\bar{b})x_2-x_1) , y_1 +s((1+\bar{b})y_2-y_1-1), s (1+\bar{b})x_2, \bar{b}s \right).
\end{equation}
Then $\psi_t(s,0)$ belongs to the tangent plane \eqref{eq:TangentPlaneEdge} if and only if
$$
(1+a) (x_1+s((1+\bar{b})x_2-x_1))= (1+as) (1+\bar{b})x_2.
$$
which is equivalent to $(1+a)x_1=(1+\bar{b})x_2$, thus proving the lemma.
\end{proof}

It is clear from Lemma \ref{lemma:ConditionCompatibility} that, under compatibility, the choice of an interpolator quadric at a quadrangle determines the choice of the interpolator quadrics at all other quadrangles of $\mathcal{D}^*$. 

\begin{prop}\label{prop:ConormalCompatibility}
Consider a pair of adjacent quadrangles with compatible interpolations. Then the co-normal of both interpolators at a common 
vertex coincide.
\end{prop}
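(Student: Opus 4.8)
The plan is to reduce to the standard configuration and then verify the claim by computing the affine co-normal of each interpolator directly at the two shared vertices. By the affine invariance of the entire construction I may assume that the pair of adjacent quadrangles is $ABCDEF$ as in \eqref{StandardPair}, with $\phi$ the interpolator of $ABCD$ given by \eqref{StandardInterpolator} with parameter $a$. Since the only vertices common to the two quadrangles are $A$ and $C$, it suffices to match the two co-normals at those two points.

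The key tool is the closed form of the affine co-normal of a surface written in asymptotic coordinates, namely $\nu=(f_u\times f_v)/\sqrt{[f_u,f_v,f_{uv}]}$. This vector is orthogonal to $f_u$ and $f_v$ and satisfies $\langle\nu,f_{uv}\rangle=\omega$, and for $\phi$ it reproduces the values $\nu(A)=\tfrac{1}{\sqrt{a+1}}(0,0,1)$ and $\nu(C)=\sqrt{a+1}(-1,0,1)$ of Lemma \ref{lemma:BasicAffineQuadric}. The directional part of the statement is then immediate: by Lemma \ref{lemma:ConditionCompatibility}, compatibility means that the two interpolators share a common tangent plane all along $AC$, so at $A$ and at $C$ the two co-normals, being both normal to that common plane, are automatically parallel. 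What remains is to check that the magnitudes and signs agree as well, and this is precisely the step where the compatibility relation must be used quantitatively.

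For the second interpolator I will parametrize the adjacent quadrangle in the orientation inherited from the net, that is, in the standard ordering $EAFC$ with its own parameter $b$, via \eqref{GeneralInterpolator}. Evaluating $f_u$, $f_v$ and $f_{uv}$ at the parameter pairs corresponding to $A$ and to $C$ and substituting them into $\nu=(f_u\times f_v)/\sqrt{[f_u,f_v,f_{uv}]}$, one finds co-normal vectors of the form $(0,0,\sqrt{(1+b)x_1/x_2})$ at $A$ and $\sqrt{x_2/((1+b)x_1)}\,(-1,0,1)$ at $C$. Substituting the compatibility condition $(1+a)(1+b)x_1=x_2$ of Lemma \ref{lemma:ConditionCompatibility}, these collapse exactly to $\tfrac{1}{\sqrt{a+1}}(0,0,1)$ and $\sqrt{a+1}(-1,0,1)$, i.e. to $\nu_\phi(A)$ and $\nu_\phi(C)$, which proves the proposition.

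The step I expect to be the main obstacle is the orientation and sign bookkeeping. The adjacent quadric was introduced through the non-standard permutation $ACEF$ with parameter $\bar b$, which reverses the orientation of the asymptotic parametrization and would render $[f_u,f_v,f_{uv}]$ negative; hence the co-normal cannot be read off naively from $\psi$ in \eqref{eq:Psi}, and one must instead work in the correctly oriented ordering $EAFC$ and track carefully the signs of $x_1,x_2<0$ when extracting the square roots. Once the orientation is fixed consistently with the net, the remaining algebra is routine, and the compatibility relation is exactly what forces the two magnitudes to coincide at both $A$ and $C$.
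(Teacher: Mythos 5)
Your proposal is correct and follows essentially the same route as the paper: reduce to the standard pair \eqref{StandardPair}, compute the affine co-normal of each interpolator at the common vertex, and observe that equality of the two values is precisely the compatibility condition of Lemma \ref{lemma:ConditionCompatibility} (the paper records $\eta_{\psi}(A)=\tfrac{\sqrt{-x_1}}{\sqrt{-x_2}\sqrt{1+\bar b}}(0,0,1)$ using the $ACEF$ ordering with parameter $\bar b$, which matches your $EAFC$ computation with $b$ since $1+\bar b=(1+b)^{-1}$). The only cosmetic differences are that you verify both common vertices $A$ and $C$ where the paper relabels so that only $A$ need be checked, and that you make explicit the orientation bookkeeping that the paper leaves implicit.
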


\begin{proof}
We may assume that the adjacent quadrangles are given by \eqref{StandardPair} with interpolating quadrics \eqref{StandardInterpolator} and \eqref{eq:Psi} and that the common vertex is $A$. Then
$$
\eta_{\phi}(A)=\frac{1}{\sqrt{1+a}}(0,0,1),\ \ {\rm and}\ \ \eta_{\psi}(A)=\frac{\sqrt{-x_1}}{\sqrt{-x_2}\sqrt{1+\bar{b}}}(0,0,1).
$$
Thus $\eta_{\phi}(A)=\eta_{\psi}(A)$ if and only if $(1+\bar{b})x_2=(1+a)x_1$, which, by Lemma \ref{lemma:ConditionCompatibility}, is exactly the compatibility condition.
\end{proof}

\subsection{Field of compatible interpolating quadrics}

A field of interpolating quadrics is a map from $\mathcal{D}^*$ to the set of interpolating quadrics. 
We say that a field of interpolating quadrics is compatible if, for each adjacent pair of quadrangles, the corresponding quadrics are compatible. In \cite{McCarthy-Schief} and \cite{Schief-Szeres}, an interpolating quadric is called a {\it discrete Lie quadric} and a compatible field of interpolating quadrics is called a {\it lattice of Lie quadrics}.

There is a one parameter choice for the co-normal vector at a fixed vertex $(i_0,j_0)$ of the asymptotic net. Then Lelieuvre's equation define the co-normal vector at all other vertices of the net. Moreover, Moutard's equation guarantees that the definition of the co-normal vector at a vertex $(i,j)$ is independent of the choice of the path from $(i_0,j_0)$ to $(i,j)$. From Proposition
\ref{prop:ConormalCompatibility}, the choice of the co-normal vector field determine a compatible field of interpolators along the quadrangles of the net. We have thus given another proof of a well-known result which says that any discrete asymptotic net admits a $1$-parameter family of fields of compatible interpolating quadrics (\cite{Rorig2014}). 

\section{Asymptotic Nets with Constant Affine Mean Curvature}

We say that a discrete asymptotic net admits a {\it constant affine mean curvature} (CAMC) structure if, among the $1$-parameter family
of quadric interpolator fields, there exists one whose affine mean curvature at each quadrangle is constant. 

\subsection{Basic properties}

Consider a pair of adjacent squares together with compatible intepolating quadrics with parameters $a$ and $b$. Denote by $\Omega_i$ the discrete affine metric at the quadrangle $i$. 
Then the condition for CAMC can be written as
\begin{equation}\label{eq:CAMC}
\frac{a}{\sqrt{1+a}\Omega_1}=\frac{b}{\sqrt{1+b}\Omega_2}.
\end{equation}

\begin{lem}\label{lemma:CAMCAdjacentQuadrangles}
Consider a discrete asymptotic net formed by a pair of adjacent quadrangles. Then there exists exactly a pair $(a,b)$ such that the asymptotic net becomes CAMC. For the pair of quadrangles $ABCDEF$ given by \eqref{StandardPair}, the condition for CAMC is $ax_1=\bar{b}$. 
\end{lem}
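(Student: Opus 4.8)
The plan is to carry out everything in the normalized configuration \eqref{StandardPair} and to combine two explicit relations: the CAMC equation \eqref{eq:CAMC} and the compatibility condition $(1+a)(1+b)x_1=x_2$ furnished by Lemma \ref{lemma:ConditionCompatibility}. The first task is to evaluate the two affine metrics in \eqref{eq:CAMC}. For $ABCD$ one has $q_1=B-A=(1,0,0)$, $q_2=C-A=(0,1,0)$ and $q_{12}=D-C-B+A=(0,0,1)$, so \eqref{eq:DefineOmega} gives $\Omega_1^2=1$. For the adjacent quadrangle, read in its natural ordering $EAFC$, I would form the first differences $A-E$ and $F-E$ and the mixed difference $C-F-A+E$ from \eqref{StandardPair} and expand the determinant \eqref{eq:DefineOmega}; the computation collapses to $\Omega_2^2=x_1x_2$, which is positive since the non-degeneracy hypothesis forces $x_1,x_2<0$.

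With $\Omega_1=1$ and $\Omega_2=\sqrt{x_1x_2}$, equation \eqref{eq:CAMC} becomes $\tfrac{a}{\sqrt{1+a}}=\tfrac{b}{\sqrt{1+b}\,\sqrt{x_1x_2}}$. The central manipulation is to square this relation and eliminate $x_2$ through compatibility: substituting $x_1x_2=(1+a)(1+b)x_1^2$ cancels the factor $1+a$ and one factor $1+b$, leaving $a^2=\tfrac{b^2}{(1+b)^2x_1^2}$, i.e. $\big(ax_1(1+b)\big)^2=b^2$. Hence $ax_1=\pm\tfrac{b}{1+b}=\mp\bar b$, so the content of the lemma is exactly the selection of the correct sign.

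Resolving this sign is the step I expect to be the true obstacle, and I would handle it by returning to the unsquared equation. Because $\sqrt{1+a}$, $\sqrt{1+b}$ and $\sqrt{x_1x_2}$ are all positive, \eqref{eq:CAMC} forces $a$ and $b$ to have the same sign. Since $x_1<0$ and $1+b>0$, the quantities $ax_1$ and $\tfrac{b}{1+b}$ then have opposite signs, which discards the branch $ax_1=-\bar b$ and leaves precisely $ax_1=-\tfrac{b}{1+b}=\bar b$ (the degenerate case $a=b=0$ being immediate). This proves the stated condition.

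For existence and uniqueness I would substitute $ax_1=\bar b$ back into compatibility. Using $\bar b=\tfrac{1}{1+b}-1$ together with $1+b=\tfrac{x_2}{(1+a)x_1}$, the relation $ax_1=\bar b$ is linear in $a$ and solves uniquely to $a=\tfrac{x_1-x_2}{x_1(x_2-1)}$, equivalently $1+a=\tfrac{x_2(x_1-1)}{x_1(x_2-1)}$. Since $x_1,x_2<0$, both numerator and denominator are products of two negative factors, so $1+a>0$ and thus $a>-1$; compatibility then determines a unique $b$ with $1+b=\tfrac{x_2}{(1+a)x_1}>0$, whence $b>-1$. As the sign argument above runs in reverse, this pair indeed satisfies \eqref{eq:CAMC}, so there is exactly one admissible pair $(a,b)$, completing the proof.
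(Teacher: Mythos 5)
Your proposal is correct and follows essentially the same route as the paper: compute $\Omega_1=1$ and $\Omega_2=\sqrt{x_1x_2}$, feed them into \eqref{eq:CAMC}, eliminate $x_2$ via the compatibility relation $(1+a)(1+b)x_1=x_2$ to obtain $ax_1=\bar b$, and solve for the unique pair $(a,b)$. The only differences are cosmetic: you square and then resolve the sign (correctly, using $x_1<0$ and the fact that $a$ and $b$ must share a sign), whereas the paper manipulates the unsquared equation directly, and you additionally verify $a,b>-1$, a check the paper leaves implicit.
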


\begin{proof}
Consider the pair of quadrangles given by \eqref{StandardPair}. Then $\Omega_1=1$, $\Omega_2=\sqrt{x_1x_2}$, and Equation \eqref{eq:CAMC}
can be written as
\begin{equation*}
\frac{a}{\sqrt{1+a}}\sqrt{x_1x_2}=\frac{b}{\sqrt{1+b}}
\end{equation*}
Observe that the compatibility equation $(1+a)(1+b)x_1=x_2$ holds. Thus
\begin{equation*}
ax_1=\bar{b}
\end{equation*}
and we obtain the solution
$$
b=\frac{x_1-x_2}{1-x_1},\ \ a=-\frac{x_1-x_2}{x_1(1-x_2)},
$$
thus proving the lemma.
\end{proof}

As a consequence, for a CAMC asymptotic net, the constant affine mean curvature $H$ is uniquely determined by the net. 

\subsection{Discrete affine minimal surfaces}

Given a pair of polygonal lines $\alpha:I\subset\mathbb{Z}\to\mathbb{R}^3$ and $\beta:J\subset\mathbb{Z}\to\mathbb{R}^3$, let $\nu:I\times J\to\mathbb{R}^3$ be defined by
$\nu(i,j)=\alpha(i)+\beta(j)$.
Observe that such a co-normal satisfies Moutard Equation with $\lambda=1$, and so
defines an asymptotic net, which by Equation \eqref{eq:CAMC}, is CAMC with $H=0$. In other words, it is an affine minimal surface. In this way we can construct plenty of examples of affine minimal discrete asymptotic nets. If the polygonal lines $\alpha$ and $\beta$ are contained in a plane, then the asymptotic net is in fact a discrete improper affine sphere (\cite{Craizer2010},\cite{Kaferbock2013},\cite{Matsuura2003}).


We have the following characterization of affine minimal strips:

\begin{prop}\label{prop:MinimalParallelPlanes}
A discrete asymptotic net is affine minimal if and only if for each horizontal strip $(j+\tfrac{1}{2})$ the vectors $q_2(i,j+\tfrac{1}{2})$ are coplanar and, for each vertical strip $(i+\tfrac{1}{2})$ the vectors $q_1(i+\tfrac{1}{2},j)$ are coplanar. 
\end{prop}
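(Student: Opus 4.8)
The plan is to route both implications through the co-normal and the characterization of minimality as $a\equiv 0$. Since the affine mean curvature of an interpolator is $-2a/(\sqrt{1+a}\,\Omega)$ and $\lambda^2=a+1$ (Lemma \ref{lemma:BasicAffineQuadric}), the net is affine minimal exactly when the compatible field can be chosen with $a\equiv 0$, i.e. with Moutard coefficient $\lambda\equiv 1$. By Equation \eqref{eq:Moutard}, $\lambda\equiv 1$ is equivalent to $\nu(i,j)+\nu(i+1,j+1)=\nu(i,j+1)+\nu(i+1,j)$, the vanishing of the mixed second difference of $\nu$, which holds if and only if the co-normal is additive, $\nu(i,j)=\alpha(i)+\beta(j)$.

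For the forward implication I would use this additive co-normal. Writing $w_j=\beta(j+1)-\beta(j)$ and using Lelieuvre's equation \eqref{eq:Lelieuvre},
\[
q_2(i,j+\tfrac12)=-\nu(i,j)\times\nu(i,j+1)=-\alpha(i)\times w_j-\beta(j)\times\beta(j+1).
\]
Both terms on the right are orthogonal to $w_j$, so every $q_2(i,j+\tfrac12)$ in the horizontal strip lies in the fixed plane $w_j^{\perp}$ and the vectors are coplanar. The symmetric computation with $v_i=\alpha(i+1)-\alpha(i)$ shows that $q_1(i+\tfrac12,j)=\nu(i,j)\times\nu(i+1,j)$ is orthogonal to $v_i$ for all $j$, giving coplanarity in each vertical strip.

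For the converse I would translate coplanarity into a relation between the parameters of adjacent interpolators. Given two horizontally adjacent squares I put them in the standard position \eqref{StandardPair}; the three consecutive strip vectors are $F-E$, $C-A$, $D-B$, and a direct computation gives $[F-E,\,C-A,\,D-B]=x_2-x_1$. Coplanarity forces $x_1=x_2$, which by the compatibility relation $(1+a)(1+b)x_1=x_2$ of Lemma \ref{lemma:ConditionCompatibility} is equivalent to $(1+a)(1+b)=1$, i.e. $\lambda_1^2\,\lambda_2^2=1$ for the two (oppositely coloured) squares. The analogous argument in the vertical strips yields the same product relation for vertically adjacent squares.

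It then remains to globalize. Since every adjacent pair of squares satisfies $\lambda^2(\lambda')^2=1$ and the net is edge-connected, $\lambda^2$ must be constant on each colour class of the checkerboard, with reciprocal values $\mu$ and $\mu^{-1}$; a black-white rescaling by $\rho=\sqrt{\mu}$ then produces a co-normal with $\lambda\equiv 1$, hence additive, so the net is affine minimal. The main obstacle is the converse: making the determinant computation genuinely affine-invariant so that the reduction to standard position is legitimate for an arbitrary pair, and then propagating the local product relation $\lambda^2(\lambda')^2=1$ through the checkerboard so that a single rescaling trivializes $\lambda$ simultaneously on all squares.
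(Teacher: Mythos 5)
Your argument is correct, but it takes a genuinely different route from the paper's, whose proof is essentially a one-line consequence of Lemma \ref{lemma:CAMCAdjacentQuadrangles}: placing an adjacent pair in the standard position \eqref{StandardPair}, that lemma already gives the unique CAMC parameters $a=-\frac{x_1-x_2}{x_1(1-x_2)}$, $b=\frac{x_1-x_2}{1-x_1}$, so $H=0$ holds precisely when $x_1=x_2$, i.e.\ when $\left[F-E,C-A,D-B\right]=0$ --- the same determinant you compute --- and the field $a\equiv 0$ is automatically compatible once this holds on every pair, by Lemma \ref{lemma:ConditionCompatibility}. What you do differently: in the forward direction you work through the co-normal, noting that $\lambda\equiv 1$ makes $\nu_2(i,j+\tfrac{1}{2})=w_j$ independent of $i$ within a strip, so Lelieuvre's equations \eqref{eq:Lelieuvre} place every $q_2(i,j+\tfrac{1}{2})$ in the fixed plane $w_j^{\perp}$; this buys more than the paper's argument, since it exhibits the strip's plane explicitly (its normal is the constant co-normal difference) and proves coplanarity of the whole strip at once rather than of consecutive triples. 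In the converse you detour through the product relation $(1+a)(1+b)=1$ and a black-white rescaling; this is valid, and your two stated worries are not real obstacles: coplanarity of three vectors and the interpolator parameter are affine invariants, so the reduction to standard position is harmless, and since adjacent squares have opposite checkerboard colour, the relation $\lambda^2(\lambda')^2=1$ along chains forces $\lambda^2$ to take reciprocal constant values $\mu,\mu^{-1}$ on the two colour classes, which the rescaling (multiplying $\lambda^2$ by $\rho^{\mp 2}$ on the two classes) trivializes. The detour is nevertheless unnecessary: once coplanarity gives $x_1=x_2$ on every adjacent pair, Lemma \ref{lemma:CAMCAdjacentQuadrangles} hands you $a=b=0$ directly and compatibility is immediate, which is exactly the paper's shortcut.
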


\begin{proof}
Consider a pair of adjacent quadrangles, which we shall assume be given by \eqref{StandardPair}. By Lemma \ref{lemma:CAMCAdjacentQuadrangles}, the pair is CAMC with $H=0$ if and only if $x_1=x_2$, which is exactly the condition 
for the three vectors $D-B$, $C-A$ and $F-E$ being coplanar.
\end{proof}

\subsection{Discrete affine spheres}

According to \cite{Bobenko1999}, a pair $(q,\nu)$ is called a discrete (proper) affine sphere (with center at the origin) if it satisfies the discrete Lelieuvre's Equations \eqref{eq:Lelieuvre} and their duals
\begin{equation}\label{eq:LelieuvreDual}
\nu_1(i+\tfrac{1}{2},j)=-H q(i+1,j)\times q(i,j), \ \ \nu_2(i,j+\tfrac{1}{2})=H q(i,j+1)\times q(i,j),
\end{equation}
for any $(i,j)\in\mathcal{D}$. 
These dual relations say in particular that both nets $q$ and $\nu$ are asymptotic and
Moutard at the same time. Observe that we are choosing
$H$ instead of $1$, as it was done in \cite{Bobenko1999}, because we shall show that the discrete affine sphere is CAMC with affine mean curvature $H$. 

Consider a quadrangle $ABCD$, $A=q(i,j)$, $B=q(i+1,j)$, $C=q(i,j+1)$, $D=q(i+1,j+1)$. We shall denote
 $\nu(A)=\nu(i,j)$, $\nu(B)=\nu(i+1,j)$, $\nu(C)=\nu(i,j+1)$, $\nu(D)=\nu(i+1,j+1)$, $\Omega=\Omega(i+\tfrac{1}{2},j+\tfrac{1}{2})$, $\lambda=\lambda(i+\tfrac{1}{2},j+\tfrac{1}{2})$. 
Since $q$ is a Moutard net, there exists $\alpha>0$ such that 
\begin{equation}\label{eq:DualMoutard}
\alpha^2 (A+D)=B+C. 
\end{equation}
We shall call this equation dual Moutard. 

\begin{lem}
Denote $z=\nu_A\cdot A$. Then
$$
\nu(B)\cdot B=\nu(C)\cdot C=\nu(D)\cdot D=\nu(A)\cdot B=\nu(B)\cdot A=\nu(C)\cdot A=\nu(A)\cdot C=
$$
$$
=\nu(D)\cdot B=\nu(B)\cdot D=\nu(C)\cdot D=\nu(D)\cdot C=z.
$$
$$
\nu(D)\cdot A=\nu(A)\cdot D=z(2\lambda^{-2}-1)=z(2\alpha^{-2}-1), 
$$
$$ 
\nu(B)\cdot C=\nu(C)\cdot B=z(2\lambda^2-1)=z(2\alpha^2-1).
$$
As a consequence, $\alpha=\lambda$. 
\end{lem}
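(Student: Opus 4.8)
The plan is to exploit two elementary linear-algebra facts together with the four structural identities available at the quadrangle: the discrete Lelieuvre equations \eqref{eq:Lelieuvre} on each of the four edges, their duals \eqref{eq:LelieuvreDual}, the Moutard equation \eqref{eq:Moutard} for $\nu$, and the dual Moutard equation \eqref{eq:DualMoutard} for $q$. The two facts are that a scalar triple product with a repeated entry vanishes, so $X\cdot(X\times Y)=0$, and that a cross product is orthogonal to each of its factors, so $(X\times Y)\cdot X=(X\times Y)\cdot Y=0$. Every identity in the ``$=z$'' block will then follow by transporting the single number $\nu(A)\cdot A=z$ to a neighbouring dot product along an edge and observing that the correction term is one of these vanishing triple products.

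More precisely, for the dot products that pair one fixed conormal against two edge-adjacent positions I would write, for instance, $\nu(A)\cdot B=\nu(A)\cdot A+\nu(A)\cdot(B-A)$ and substitute $B-A=\nu(A)\times\nu(B)$ from \eqref{eq:Lelieuvre}, so the correction vanishes and $\nu(A)\cdot B=z$; the same device handles $\nu(A)\cdot C$, $\nu(B)\cdot B$, $\nu(C)\cdot C$, and the analogous relations at the far corner using $\nu(C)\times\nu(D)=D-C$ and $\nu(B)\times\nu(D)=B-D$. For the dot products that pair one fixed position against two conormals I would instead substitute the dual Lelieuvre relations \eqref{eq:LelieuvreDual}, e.g. $\nu(B)\cdot A=\nu(A)\cdot A+(\nu(B)-\nu(A))\cdot A$ with $\nu(B)-\nu(A)=-H\,B\times A$, so the correction $(B\times A)\cdot A$ vanishes and $\nu(B)\cdot A=z$; likewise for $\nu(C)\cdot A$, $\nu(D)\cdot B$, $\nu(D)\cdot C$, $\nu(B)\cdot D$, and $\nu(C)\cdot D$. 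The only bookkeeping here is to keep the edge orientations and signs of \eqref{eq:Lelieuvre} and \eqref{eq:LelieuvreDual} consistent at the corner $D$.

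The four diagonal entries require the Moutard equations rather than the Lelieuvre ones. From the dual Moutard equation $B+C=\alpha^2(A+D)$, dotting with $\nu(B)$ gives $z+\nu(B)\cdot C=\nu(B)\cdot(B+C)=\alpha^2(\nu(B)\cdot A+\nu(B)\cdot D)=2\alpha^2 z$, whence $\nu(B)\cdot C=(2\alpha^2-1)z$; dotting with $\nu(C)$ gives the same value for $\nu(C)\cdot B$, and dotting the rearranged form $A+D=\alpha^{-2}(B+C)$ with $\nu(A)$ and with $\nu(D)$ yields $\nu(A)\cdot D=\nu(D)\cdot A=(2\alpha^{-2}-1)z$. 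Running the identical computation with the Moutard equation \eqref{eq:Moutard}, namely $\nu(B)+\nu(C)=\lambda^2(\nu(A)+\nu(D))$, and using the already-proven ``$=z$'' entries, produces $\nu(B)\cdot C=(2\lambda^2-1)z$ and $\nu(D)\cdot A=(2\lambda^{-2}-1)z$. Comparing the two expressions for the same diagonal dot product gives $(2\alpha^2-1)z=(2\lambda^2-1)z$, and since $z=\nu(A)\cdot A\neq 0$ this forces $\alpha^2=\lambda^2$, hence $\alpha=\lambda$ because both are positive.

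The step I expect to be the genuine content, rather than bookkeeping, is this last one: everything else is a transport of the number $z$ by vanishing triple products, but the coincidence $\alpha=\lambda$ only emerges because the \emph{same} diagonal quantity $\nu(B)\cdot C$ (respectively $\nu(D)\cdot A$) is computed in two ways, once through the dual Moutard constant $\alpha$ of the net $q$ and once through the Moutard constant $\lambda$ of the conormal net $\nu$. The only point that needs care is the non-vanishing of $z$, which must be justified from the affine-sphere hypothesis: for a proper affine sphere centred at the origin the position field and its conormal cannot be orthogonal, equivalently the center does not lie on the tangent planes, so that cancelling $z$ in the final identity is legitimate.
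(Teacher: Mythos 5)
Your proposal is correct and follows exactly the route the paper intends: the paper's own proof is the one-line assertion that everything follows from the Lelieuvre, dual Lelieuvre, Moutard and dual Moutard equations, and your argument is precisely the omitted computation (transport of $z$ along edges via vanishing triple products, then dotting the two Moutard relations to get the diagonal values and $\alpha=\lambda$). Your remark that $z\neq 0$ needs justification is a fair point the paper glosses over; it follows from non-degeneracy, since $z=0$ would force $A,B,C,D$ into the plane through the origin orthogonal to $\nu(A)$ and hence $\Omega^2=0$.
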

\begin{proof}
All formulas follow easily from Lelieuvre's \eqref{eq:Lelieuvre}, dual Lelieuvre \eqref{eq:LelieuvreDual}, Moutard \eqref{eq:Moutard} and dual Moutard \eqref{eq:DualMoutard} equations. 
\end{proof}

\begin{lem}
We have that
\begin{equation}\label{eq:Omegaq}
\Omega^2=2\frac{1-\lambda^2}{\lambda^2}\left[ A,B,C \right].
\end{equation}
\end{lem}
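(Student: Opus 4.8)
The plan is to express everything in terms of the four vertices $A,B,C,D$ and the single scalar $\lambda$, and then reduce the claim to a purely algebraic identity among triple products.

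First I would rewrite the three vectors appearing in the definition \eqref{eq:DefineOmega} of $\Omega^2$ using the notation of the quadrangle. Directly from the difference operators one has $q_1(i+\tfrac{1}{2},j)=B-A$, $q_2(i,j+\tfrac{1}{2})=C-A$, and $q_{12}(i+\tfrac{1}{2},j+\tfrac{1}{2})=A-B-C+D$, so that $\Omega^2=[\,B-A,\,C-A,\,A-B-C+D\,]$.

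Next I would eliminate $D$ using the dual Moutard relation. By the previous lemma we have $\alpha=\lambda$, and hence \eqref{eq:DualMoutard} reads $\lambda^2(A+D)=B+C$. Writing $A-B-C+D=(A+D)-(B+C)$ and substituting $A+D=\lambda^{-2}(B+C)$ gives $q_{12}(i+\tfrac{1}{2},j+\tfrac{1}{2})=\frac{1-\lambda^2}{\lambda^2}(B+C)$. This is the step where the affine-sphere hypothesis genuinely enters, through the duality that forces $\alpha=\lambda$; everything else is formal manipulation.

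It then remains to compute $[\,B-A,\,C-A,\,B+C\,]$, which I expect to be the only truly computational part. Expanding by multilinearity and discarding the terms with a repeated argument, one finds $[\,B-A,C-A,B\,]=[A,B,C]$ and $[\,B-A,C-A,C\,]=[A,B,C]$, so that $[\,B-A,C-A,B+C\,]=2[A,B,C]$. Combining this with the previous paragraph yields $\Omega^2=2\frac{1-\lambda^2}{\lambda^2}[A,B,C]$, as claimed. The only care needed is the sign bookkeeping in the triple-product expansion; there is no conceptual obstacle beyond correctly invoking $\alpha=\lambda$ at the right moment.
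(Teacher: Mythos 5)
Your proposal is correct and follows exactly the route the paper intends, which it leaves unspoken: substitute $q_{12}=(A+D)-(B+C)$, use the dual Moutard relation with $\alpha=\lambda$ to get $q_{12}=\frac{1-\lambda^2}{\lambda^2}(B+C)$, and expand the triple product $[\,B-A,\,C-A,\,B+C\,]=2[A,B,C]$. All steps, including the sign bookkeeping and the explicit appeal to $\alpha=\lambda$ from the preceding lemma, check out.
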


\begin{proof}
This formula follows from the definition of $\Omega$ \eqref{eq:DefineOmega} and the dual Moutard equation \eqref{eq:DualMoutard}. 
\end{proof}

\begin{prop}
We have that
$$
1-\lambda^2=\tfrac{H}{2}\lambda\Omega.
$$
\end{prop}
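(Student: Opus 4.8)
The plan is to reduce the claimed identity to a single relation between two triple products and then evaluate it using the affine sphere equations. Rewriting the target $1-\lambda^2=\tfrac{H}{2}\lambda\Omega$ with the help of the previous lemma, namely $\Omega^2=2\frac{1-\lambda^2}{\lambda^2}[A,B,C]$ from \eqref{eq:Omegaq}, one checks that (after cancelling one factor of $\Omega$) the statement is equivalent to the clean relation $\lambda\Omega=H[A,B,C]$. Since \eqref{eq:OmegaLambdaNu} already gives $\lambda\Omega=[\nu(A),\nu(B),\nu(C)]$, the whole proposition comes down to proving
\[
[\nu(A),\nu(B),\nu(C)]=H[A,B,C].
\]

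First I would compute the left-hand triple product directly from the affine sphere structure. The dual Lelieuvre relations \eqref{eq:LelieuvreDual} give $\nu(B)-\nu(A)=H\,A\times B$ and $\nu(C)-\nu(A)=-H\,A\times C$. Expanding $[\nu(A),\nu(B),\nu(C)]$ by multilinearity and discarding every term with a repeated $\nu(A)$ leaves $-H^2[\nu(A),A\times B,A\times C]$. Using the identity $(A\times B)\times(A\times C)=[A,B,C]\,A$ together with $\nu(A)\cdot A=z$, this collapses to $[\nu(A),\nu(B),\nu(C)]=-H^2 z\,[A,B,C]$. So the remaining task is to pin down the normalization $z=\nu(A)\cdot A$.

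For that, the key computation is to substitute $B=A+\nu(A)\times\nu(B)$, read off from Lelieuvre \eqref{eq:Lelieuvre}, into $A\times B$ and apply the triple-product identity $a\times(b\times c)=b(a\cdot c)-c(a\cdot b)$. Using the dot-product values $\nu(A)\cdot A=\nu(B)\cdot A=z$ supplied by the preceding lemma, this yields $A\times B=z(\nu(A)-\nu(B))$. Comparing with $\nu(A)-\nu(B)=-H\,A\times B$ from \eqref{eq:LelieuvreDual} and cancelling the nonzero vector $A\times B$ (non-degeneracy) forces $z=-1/H$. Substituting back gives $[\nu(A),\nu(B),\nu(C)]=-H^2(-1/H)[A,B,C]=H[A,B,C]$, which is precisely the reduced relation; combined with \eqref{eq:Omegaq} as above, this produces $1-\lambda^2=\tfrac{H}{2}\lambda\Omega$.

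The main obstacle is bookkeeping rather than any deep idea: the whole argument hinges on establishing $z=-1/H$, and this requires selecting exactly the right dot-product identities from the previous lemma and tracking the signs in \eqref{eq:Lelieuvre} and \eqref{eq:LelieuvreDual} with care, since a single misplaced sign would flip $z$ and hence the sign of $H$. The two pieces of vector algebra — the trilinear expansion and the identity $(A\times B)\times(A\times C)=[A,B,C]\,A$ — are then entirely routine.
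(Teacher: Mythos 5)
Your argument is correct, and its overall skeleton matches the paper's: both proofs come down to the single identity $[\nu(A),\nu(B),\nu(C)]=H[A,B,C]$, which is then combined with \eqref{eq:OmegaLambdaNu} and \eqref{eq:Omegaq} exactly as you do. Where you diverge is in how that identity is established. The paper gets it in one line by telescoping the scalar relation $0=\nu(B)\cdot B-\nu(C)\cdot C$, inserting $\nu(A)$ and $A$ as intermediate terms and letting Lelieuvre and dual Lelieuvre each kill half of the expansion. You instead expand the determinant $[\nu(A),\nu(B),\nu(C)]$ directly via the dual Lelieuvre relations, reduce it to $-H^2 z\,[A,B,C]$ with the identity $(A\times B)\times(A\times C)=[A,B,C]A$, and then pin down the support function $z=\nu(A)\cdot A=-1/H$ by a separate cross-product computation. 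Your route is slightly longer but isolates a geometrically meaningful byproduct --- the constancy of the affine support function $\nu\cdot q=-1/H$ for a proper affine sphere centered at the origin --- which the paper's telescoping argument never makes explicit; the paper's version is shorter but more opaque. All the sign bookkeeping in your derivation checks out against the conventions of \eqref{eq:Lelieuvre} and \eqref{eq:LelieuvreDual}, and the cancellation of $A\times B$ is justified by non-degeneracy (if $A\times B=0$ then $\nu(A)=\nu(B)$ and hence $q_1=0$, contradicting $\Omega^2>0$).
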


\begin{proof}
Observe that
$$
0=\nu(B)\cdot B-\nu(C)\cdot C=(\nu(B)-\nu(C)\cdot B+\nu(C)\cdot (B-C)=
$$
$$
=(\nu(B)-\nu(A)+\nu(A)-\nu(C))\cdot B+\nu(C)\cdot (B-A+A-C)
$$
$$
=H(A\times C)\cdot B+\nu(C)\cdot (\nu(A)\times \nu(B)).
$$
Thus, from Equations \eqref{eq:OmegaLambdaNu} and \eqref{eq:Omegaq}
$$
\frac{H\lambda^2}{2(1-\lambda^2)}\Omega^2=\lambda\Omega,
$$
which proves the proposition.
\end{proof}

From the above proposition we conclude that 
$$
-\tfrac{2a}{\sqrt{1+a}\Omega}=H
$$
and so a discrete proper affine sphere is a CAMC with affine mean curvature $H$. 

Consider the quadrangle $ABCD$ given by \eqref{StandardPair} and let 
$O=\tfrac{1}{2}(1,1,\tfrac{a+1}{a})$ be the center of the interpolating quadric. Then, since any quadric is an affine sphere, the following dual Lelieuvre's equations hold:
\begin{equation*}
\nu(B)-\nu(A)=-H(B-O)\times (A-O), \ \ \nu(C)-\nu(A)=H(C-O)\times (A-O),
\end{equation*}
and similarly for the edges $CD$ and $BD$.

\begin{lem}
The asymptotic net is a discrete affine sphere if and only if the centers
of a compatible interpolation are independent of the quadrangle. 
\end{lem}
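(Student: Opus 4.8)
The plan is to prove both implications using the observation, recorded just before the statement, that every interpolating quadric is itself an affine sphere: if the quadric at a quadrangle $ABCD$ has center $O$ and affine mean curvature $H$, then the co-normal field satisfies the per-quadrangle dual Lelieuvre relations
$$
\nu(B)-\nu(A)=-H(B-O)\times(A-O),\qquad \nu(C)-\nu(A)=H(C-O)\times(A-O),
$$
and analogously on the edges $CD$ and $BD$. The algebraic engine behind the whole argument is the elementary identity
$$
(B-P)\times(A-P)-(B-O)\times(A-O)=(B-A)\times(O-P),
$$
valid for any points $A,B,O,P$, which converts a comparison of two centers into a parallelism condition along an edge.

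For the forward direction, suppose the net is a discrete affine sphere with center $P$ and constant affine mean curvature $H$. By the proposition preceding the statement it is then CAMC, so I may select the (essentially unique) CAMC compatible field of interpolating quadrics; by construction its affine mean curvature at every quadrangle equals the net's constant $H$. Fix a quadrangle $ABCD$ and let $O$ be the center of its interpolator. Writing the global dual Lelieuvre equations of the affine sphere (with center $P$) alongside the per-quadrangle equations (with center $O$), and cancelling the common factor $-H$, I obtain $(B-P)\times(A-P)=(B-O)\times(A-O)$ and $(C-P)\times(A-P)=(C-O)\times(A-O)$. By the identity above these read $(B-A)\times(O-P)=0$ and $(C-A)\times(O-P)=0$, so $O-P$ is parallel to both $B-A$ and $C-A$. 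Since $B-A$ and $C-A$ are linearly independent (the quadrangle is non-degenerate), $O-P=0$, i.e.\ $O=P$. As the quadrangle was arbitrary, all centers coincide with $P$.

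For the converse, suppose a compatible field of interpolating quadrics has a common center $O$. Each quadrangle then satisfies the per-quadrangle dual Lelieuvre relations with this same center $O$ and its own curvature. Consider an edge shared by two adjacent quadrangles: the left-hand side (a fixed co-normal difference) and the cross product on the right are the same for both quadrangles, while this cross product is nonzero because the edge is a ruling of a central quadric and hence does not pass through $O$. Cancelling it forces the two curvatures to agree. Since any two quadrangles are joined by a chain of adjacent ones, the affine mean curvature is globally a single constant $H$, and the per-quadrangle relations assemble into the global dual Lelieuvre equations with center $O$ and constant $H$. Thus the net is a discrete affine sphere.

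The main obstacle I anticipate is the careful bookkeeping in the converse: one must match the edge labelings of the two quadrangles sharing an edge (the bottom edge $AB$ of one is the top edge $CD$ of its neighbour) and track the sign of $H$ coming from the vertex permutations, in order to be sure that the two dual Lelieuvre relations really produce the same nonzero cross-product factor. A secondary point requiring justification is that the interpolation chosen in the forward direction is genuinely the CAMC one whose curvature matches the net, and that the relevant cross products are nonzero — both following from non-degeneracy together with the fact that the edges lie on the quadrics as rulings.
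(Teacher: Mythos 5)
Your proposal is correct and follows essentially the same route as the paper: the key input in both is that each interpolating quadric, being an affine sphere, satisfies per-quadrangle dual Lelieuvre relations with its own center and curvature, so the net is a discrete affine sphere exactly when these assemble into the global dual Lelieuvre equations, i.e.\ when the centers (and hence, by comparison along shared edges, the curvatures) agree. The paper compresses all of this into one sentence; your write-up supplies the details it leaves implicit, including the cross-product identity for the forward direction and the matching of curvatures across adjacent quadrangles in the converse.
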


\begin{proof}
Since dual Leliuvre's equations hold at each quadrangle, we have only to verify that the centers of all quadrangles 
coincide.
\end{proof}

\begin{lem}
Consider the adjacent pair of quadrangles given by \eqref{StandardPair}. The discrete asymptotic net is a discrete affine sphere if and only if it is CAMC and 
\begin{equation}\label{eq:AdjacentAS}
y_2-1=(1+b)y_1.
\end{equation}
\end{lem}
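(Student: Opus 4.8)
The plan is to reduce the statement to the center-coincidence criterion established just above, namely that a pair of adjacent quadrangles with a compatible pair of interpolating quadrics is a discrete affine sphere if and only if the centers of the two interpolating quadrics agree. Thus the whole argument becomes the computation of both centers and their coordinate-by-coordinate comparison. First I would record, from Lemma \ref{lemma:BasicAffineQuadric}, the center of the interpolator $\phi$ on $ABCD$, which is $O_1=\tfrac12(1,1,\tfrac{a+1}{a})$.

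Next I would compute the center of $\psi$ on the quadrangle $ACEF$, which carries the parameter $\bar b$. Since $\psi=T\circ\phi$ for the affine map $T$ attached to the permutation $ACEF$, its center is $T$ applied to the standard center $\tfrac12(1,1,\tfrac{\bar b+1}{\bar b})$. Here $T$ is the translation by $A$ composed with the linear map whose columns are $C-A=(0,1,0)$, $E-A=(x_1,y_1,0)$ and $F+A-C-E=(x_2-x_1,\,y_2-1-y_1,\,x_2)$. Using the relation $\bar b=-\tfrac{b}{b+1}$ recorded earlier, one gets the convenient simplification $\tfrac{\bar b+1}{\bar b}=-\tfrac1b$, so the third coordinate of the standard center collapses to $-\tfrac1b$.

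I would then impose $O_1=T\!\left(\tfrac12(1,1,-\tfrac1b)\right)$, producing three scalar equations, one per coordinate, and simplify each using the compatibility relation $(1+a)(1+b)x_1=x_2$ from Lemma \ref{lemma:ConditionCompatibility}. I expect the third-coordinate equation $x_2\!\cdot\!(-\tfrac1b)=\tfrac{a+1}{a}$ to collapse, after substituting $x_2=(1+a)(1+b)x_1$, exactly to $ax_1=\bar b$, i.e. the CAMC condition of Lemma \ref{lemma:CAMCAdjacentQuadrangles}; and the first-coordinate equation, which involves only $x_1,x_2,b$, to reduce via the same substitution to the identical condition $ax_1=\bar b$. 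The second-coordinate equation, the only one carrying $y_1,y_2$, should reduce after clearing denominators directly to $(1+b)y_1=y_2-1$, that is to \eqref{eq:AdjacentAS}.

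Assembling these, the three center-coincidence equations are equivalent to the single CAMC condition together with \eqref{eq:AdjacentAS}, which is precisely the claimed characterization; since each reduction is an equivalence once compatibility is assumed, both implications follow simultaneously. The main obstacle is purely the bookkeeping of transporting the standard center through the permuted affine map $T$ and tracking the substitution $\bar b=-\tfrac{b}{b+1}$, so that the first and third coordinate conditions both visibly collapse onto the CAMC relation rather than onto two a priori independent constraints.
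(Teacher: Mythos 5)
Your proposal is correct and follows essentially the same route as the paper: both invoke the center-coincidence criterion, compute the center of the second interpolator (the paper simply states the value of $O_1$ that you obtain by pushing the standard center through $T$), and observe that the first and third coordinate equations reduce via compatibility to the CAMC condition $ax_1=\bar b$ while the second reduces to \eqref{eq:AdjacentAS}. The only difference is that you make explicit the computation of $O_1$ and the simplification $\tfrac{\bar b+1}{\bar b}=-\tfrac1b$, which the paper leaves to the reader.
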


\begin{proof}
One can verify that the center $O_1$ of the quadrangle $EACF$ is 
$$
O_1=\frac{1}{2\bar{b}}\left( (\bar{b}+1)x_2-x_1, (\bar{b}+1)y_2-y_1-1,  (\bar{b}+1)x_2  \right).
$$
Since, by compatibility, $(\bar{b}+1)x_2=(1+a)x_1$, we obtain
$$
O_1=\frac{1}{2\bar{b}}\left( ax_1, (\bar{b}+1)y_2-y_1-1,  (a+1)x_1  \right).
$$
Recall that the CAMC is equivalent to $ax_1=\bar{b}$. Thus the first and third components of $O$ and $O_1$ coincide if and only if the CAMC condition holds. Since the second components of $O$ and $O_1$ coincide if and only if Equation \eqref{eq:AdjacentAS} holds, the lemma is proved. 
\end{proof}

\section{Some properties of CAMC asymptotic nets}

\subsection{Coincidence of generators}

Consider two adjacent compatible interpolating quadrics $\phi(u,v)$ and $\psi(s,t)$ that coincide at the line $i=i_0$. We would like to look for points of the common edge such that the generators of both quadrics at this point coincide.

Assume that the pair of quadrangles are given by \eqref{StandardPair} and the interpolating quadrics $\phi$ and $\psi$ are given by \eqref{StandardInterpolator} and \eqref{eq:Psi}. Then the generator of $\phi$ at $\phi(0,v)=(0,v,0)$ is the line 
$u\to\phi(u,v)$, while the generator of $\psi$ at $\psi(s,0)=(0,s,0)$ is given by $t\to\psi(s,t)$.  

\begin{lem}
Assume that the pair of quadrangles are given by \eqref{StandardPair} and the interpolating quadrics $\phi$ and $\psi$ are given by \eqref{StandardInterpolator} and \eqref{eq:Psi}. The condition for the coincidence of the generators at $v=s$ is
\begin{equation}\label{eq:CoincidenceGenerators}
(\bar{b}-ax_1)s^2+(ax_1+y_1+1-\bar{b}y_2-y_2)s-y_1=0
\end{equation}
\end{lem}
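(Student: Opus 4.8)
The plan is to translate ``coincidence of generators'' into a linear-dependence condition in homogeneous coordinates and then read off the resulting scalar equation. Since $\phi(0,v)=(0,v,0)$ and $\psi(s,0)=(0,s,0)$, setting $v=s$ makes both generators pass through the single point $(0,s,0)$, which in homogeneous coordinates is $P=(0,s,0,1)$. The generator of $\phi$ at this point is the $u$-curve $u\mapsto\phi(u,s)$, and the generator of $\psi$ is the $t$-curve $t\mapsto\psi(s,t)$. A quick check shows that, in homogeneous coordinates, each of these parametrizations is affine-linear in its ruling parameter, so each generator is exactly the projective line spanned by $P$ together with the tangent vector at the base point. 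These tangent vectors are precisely $\phi_u(0,s)$ from \eqref{eq:PhiU} and $\psi_t(s,0)$ from \eqref{eq:PsiT}, both already available in homogeneous coordinates.

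First I would record that two projective lines sharing the point $P$ coincide if and only if the three homogeneous vectors $P$, $\phi_u(0,s)$ and $\psi_t(s,0)$ are linearly dependent, i.e. the $3\times4$ matrix with these rows has rank at most $2$. Equivalently, I would seek scalars $\mu,\nu$ with $\psi_t(s,0)=\mu\,\phi_u(0,s)+\nu\,P$ and impose consistency across all four coordinates. Next I would simplify $\psi_t(s,0)$ using the compatibility hypothesis: by Lemma \ref{lemma:ConditionCompatibility} the quadrics are compatible precisely when $(1+\bar{b})x_2=(1+a)x_1$, and substituting this into \eqref{eq:PsiT} collapses the first and third homogeneous components of $\psi_t(s,0)$ into $x_1(1+as)$ and $(1+a)x_1 s$, i.e. exactly $x_1$ times the corresponding components $(1+as)$ and $(1+a)s$ of $\phi_u(0,s)$. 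This forces $\mu=x_1$, while the fourth (homogenizing) component forces $\nu=s(\bar{b}-ax_1)$; both are thereby determined with no further condition. It is only the second component that is not automatically satisfied, and equating it gives the single scalar relation
\begin{equation*}
y_1+s\big((1+\bar{b})y_2-y_1-1\big)=ax_1\,s+(\bar{b}-ax_1)s^2,
\end{equation*}
which rearranges directly into \eqref{eq:CoincidenceGenerators}.

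The computation itself is routine once it is organized this way; the one genuinely load-bearing step --- and the place where the hypotheses do real work --- is the observation that compatibility is exactly what makes the first and third components of $\psi_t(s,0)$ proportional to those of $\phi_u(0,s)$ with the \emph{same} factor $x_1$. Without this, the scale $\mu$ would be overdetermined and the two generators could not be parallel for any $s$; with it, the rank condition reduces cleanly to the single quadratic in $s$ coming from the second coordinate. I therefore expect no serious obstacle beyond carrying the compatibility substitution carefully through \eqref{eq:PsiT} and keeping the homogeneous bookkeeping straight.
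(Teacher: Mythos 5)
Your proposal is correct and follows essentially the same route as the paper: both reduce coincidence of the generators to linear dependence of $\phi(0,s)$, $\phi_u(0,s)$ and $\psi_t(s,0)$ in homogeneous coordinates, after simplifying $\psi_t(s,0)$ via the compatibility relation $(1+\bar{b})x_2=(1+a)x_1$. The only cosmetic difference is that you solve explicitly for the coefficients $\mu,\nu$ and impose consistency on the second coordinate, whereas the paper notes that two columns of the $3\times4$ matrix are dependent and sets the remaining $3\times3$ determinant to zero --- the same rank condition either way.
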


\begin{proof}
We are looking for points $(0,v)$ such that
$\phi_u(0,v)$ given by Equation \eqref{eq:PhiU} and $\psi_t(s,0)$ given by Equation \eqref{eq:PsiT}, $s=v$, are parallel. In homogeneous coordinates,
$$
\phi(0,v)=(0,v, 0, 1),\ \ \phi_u(0,v)=\left( 1+av, av, (1+a)v, av \right),
$$
and we can simplify $\psi_t(s,0)$ to obtain
$$
\psi_t(s,0)=\left( x_1(1+sa) , y_1 +s((1+\bar{b})y_2-y_1-1), sx_1(1+a), \bar{b}s \right).
$$
Consider the $3\times 4$ matrix formed by these three vectors. Since the first and third columns are linearly dependent, the generators coincide if and only if the determinant of the last three columns of these three vectors vanishes. Calculating this determinant we obtain condition \eqref{eq:CoincidenceGenerators}.
\end{proof}

\begin{prop}
The point $s=\infty$ is a solution of Equation \eqref{eq:CoincidenceGenerators} if and only if the asymptotic net is CAMC. In this case, Equation \eqref{eq:CoincidenceGenerators} reduces to
\begin{equation}\label{eq:CoincidenceGeneratorsCAMC}
\left( (\bar{b}+1)(1-y_ 2)+y_1 \right)s-y_1=0
\end{equation}
\end{prop}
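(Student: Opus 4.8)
The plan is to analyze the quadratic Equation \eqref{eq:CoincidenceGenerators} in the variable $s$ and determine precisely when $s=\infty$ is a root. For a quadratic $\alpha s^2 + \beta s + \gamma = 0$, the value $s=\infty$ is a solution exactly when the leading coefficient $\alpha$ vanishes, since in projective (homogeneous) terms the point at infinity corresponds to setting the highest-degree coefficient to zero. In our case the leading coefficient is $\bar{b}-ax_1$, so $s=\infty$ is a root if and only if $\bar{b}=ax_1$.

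The key observation is that by Lemma \ref{lemma:CAMCAdjacentQuadrangles}, the CAMC condition for the pair of quadrangles given by \eqref{StandardPair} is exactly $ax_1=\bar{b}$. Thus the vanishing of the leading coefficient of \eqref{eq:CoincidenceGenerators} is literally the CAMC condition, which immediately establishes the equivalence claimed in the first sentence of the proposition. I would state this correspondence explicitly, citing Lemma \ref{lemma:CAMCAdjacentQuadrangles}, rather than re-deriving the CAMC condition from Equation \eqref{eq:CAMC}.

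For the second assertion, I would substitute $\bar{b}=ax_1$ into the remaining coefficients of \eqref{eq:CoincidenceGenerators} and simplify. With the leading term gone, Equation \eqref{eq:CoincidenceGenerators} collapses to a linear equation in $s$; the coefficient of $s$ becomes $ax_1+y_1+1-\bar{b}y_2-y_2$, and using $ax_1=\bar{b}$ this is $\bar{b}+y_1+1-\bar{b}y_2-y_2 = \bar{b}(1-y_2)+(1-y_2)+y_1 = (\bar{b}+1)(1-y_2)+y_1$, which is exactly the coefficient appearing in \eqref{eq:CoincidenceGeneratorsCAMC}. The constant term $-y_1$ is unchanged. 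This reproduces Equation \eqref{eq:CoincidenceGeneratorsCAMC} verbatim.

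The only potential subtlety is the interpretation of $s=\infty$ as a genuine geometric solution rather than a mere algebraic artifact. Since the interpolating quadrics and their generators were set up in projective/homogeneous coordinates throughout this section (as in the proofs of Lemma \ref{lemma:ConditionCompatibility} and the coincidence lemma), the point $s=\infty$ corresponds to a well-defined point on the common edge in the projective completion, so the identification of ``$s=\infty$ is a root'' with ``leading coefficient vanishes'' is legitimate. I expect the main obstacle to be purely bookkeeping: confirming the algebraic simplification of the $s$-coefficient under the substitution $\bar{b}=ax_1$ is correct, which is a short routine computation once the CAMC condition is invoked.
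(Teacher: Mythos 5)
Your proposal is correct and follows essentially the same route as the paper: the paper's proof is simply the one-line observation that the leading coefficient $\bar{b}-ax_1$ of Equation \eqref{eq:CoincidenceGenerators} vanishes exactly when the CAMC condition $ax_1=\bar{b}$ of Lemma \ref{lemma:CAMCAdjacentQuadrangles} holds. Your explicit simplification of the linear coefficient to $(\bar{b}+1)(1-y_2)+y_1$ is the same routine substitution the paper leaves implicit.
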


\begin{proof}
Just recall that the condition for CAMC asymptotic net is $ax_1=\bar{b}$ (Lemma \ref{lemma:CAMCAdjacentQuadrangles}).
\end{proof}

In the following sections we shall see some interesting consequences of Equations \eqref{eq:CoincidenceGenerators} and \eqref{eq:CoincidenceGeneratorsCAMC}.

\subsection{Ruled CAMC asymptotic nets}

We say that a discrete asymptotic net is ruled if one of the two families of polygonal lines is in fact a family of straight lines. In other words, there are two possibilities: (1) For each fixed $i_0$, the polygonal line $j\to q(i_0,j)$ is a straight line, or (2)
for each fixed $j_0$, the polygonal line $i\to q(i,j_0)$ is a straight line. For the sake of definiteness, we shall assume in this section that (2) holds.

Consider a pair of adjacent quadrangles $(i-\tfrac{1}{2},j_0+\tfrac{1}{2})$ and $(i+\tfrac{1}{2},j_0+\tfrac{1}{2})$ with a compatible pair of interpolator quadrics $\phi(u,v)$ and $\psi(s,t)$. 
Since the asymptotic net is ruled, we have that the segments $u\to\phi(u,0)$ and $t\to\psi(0,t)$ are collinear, as well as the segments
$u\to\phi(u,1)$ and $t\to\psi(1,t)$. We would like to know if, for fixed $v_0$ and $t_0$ such that $\phi(0,v_0)=\psi(s_0,0)$, the segments $u\to \phi(u,v_0)$ and $t\to\psi(s_0,t)$ are collinear. In the positive case, we say that $\phi$ and $\psi$ have compatible rulings.

\begin{prop}
Consider a ruled strip of an asymptotic net. The following statements are equivalent:
\begin{enumerate}
\item
The strip is CAMC.

\item
There exist interpolating quadrics along the strip with compatible rulings.

\item
There exists one single quadric that interpolates all quadrangles of the strip. 
\end{enumerate}
\end{prop}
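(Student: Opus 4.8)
The plan is to reduce all three statements to the generator–coincidence equation \eqref{eq:CoincidenceGenerators}, evaluated in the ruled situation. Working one adjacent pair at a time, I would place the two quadrangles in the standard position \eqref{StandardPair} with interpolating quadrics $\phi$ and $\psi$ from \eqref{StandardInterpolator} and \eqref{eq:Psi}. Since by assumption the ruling runs in the $i$-direction, the bottom row $E,A,B$ and the top row $F,C,D$ must each be collinear; a direct check shows these two collinearities are equivalent to $y_1=0$ and $y_2=1$. Substituting these values into \eqref{eq:CoincidenceGenerators} collapses it to
\begin{equation*}
(\bar{b}-ax_1)\,s(s-1)=0,
\end{equation*}
the boundary rulings $s=0,1$ appearing as roots automatically, as they must. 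This factorization is the computational core of the argument.

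I would then prove the cycle $(1)\Rightarrow(2)\Rightarrow(3)\Rightarrow(1)$. For $(1)\Rightarrow(2)$: by Lemma \ref{lemma:CAMCAdjacentQuadrangles} a CAMC pair is exactly one with $\bar{b}=ax_1$, so the leading coefficient above vanishes and the equation holds for every $s$; hence the generators $u\mapsto\phi(u,v)$ and $t\mapsto\psi(s,t)$ coincide at every interior point $s=v$ of the common edge, which is precisely the assertion that $\phi$ and $\psi$ have compatible rulings. Since a CAMC strip is CAMC on each adjacent pair, this holds all along the strip.

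For $(2)\Rightarrow(3)$: compatible rulings give, beyond the two shared boundary generators at $s=0$ and $s=1$, at least one further shared generator at an interior $s$; these three generators belong to a single regulus and are pairwise skew, and three pairwise skew lines lie on a unique quadric, so $\phi$ and $\psi$ are the same quadric. Applying this to consecutive pairs and using transitivity of equality of quadrics, a single quadric $\mathcal{Q}$ interpolates every quadrangle of the strip, its patches gluing to one patch of $\mathcal{Q}$. For $(3)\Rightarrow(1)$: if one quadric interpolates two adjacent quadrangles, then all its generators are shared, so \eqref{eq:CoincidenceGenerators} holds for interior $s$, forcing $\bar{b}=ax_1$, i.e. CAMC, on each pair and hence on the whole strip.

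The collinearity computation and the factorization are routine; the step requiring genuine care is $(2)\Rightarrow(3)$, where I must pass from \emph{the two ruled quadrics share a family of generators} to \emph{the two ruled quadrics are equal}. The clean device is the classical fact that three pairwise skew lines determine a unique quadric containing them, so that two quadrics sharing three such generators must coincide; the extension from a single pair to the entire strip is then only transitivity of equality of quadrics.
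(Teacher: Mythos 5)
Your proposal is correct, and its skeleton is the paper's: reduce to the standard pair \eqref{StandardPair}, note that the ruling forces $y_1=0$ and $y_2=1$, and observe that \eqref{eq:CoincidenceGenerators} then factors as $(\bar{b}-ax_1)s(s-1)=0$; your $(1)\Rightarrow(2)$ is exactly the paper's remark that \eqref{eq:CoincidenceGeneratorsCAMC} becomes an identity. The genuine divergence is in $(2)\Rightarrow(3)$. The paper first extracts the CAMC relation $x_2=\tfrac{(1+a)x_1}{1+ax_1}$ from compatibility of rulings and then checks by direct computation that the extended patch of $\phi$ with $u$ running from $x_1$ to $1$ passes through $E=\phi(x_1,0)$ and $F=\phi(x_1,1)$, so the single quadric and its interpolating patch over the adjacent quadrangle are exhibited explicitly. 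You instead invoke the classical fact that three pairwise skew lines (the two boundary generators, which automatically coincide in the ruled case, plus one interior shared generator) lie on a unique quadric, forcing $\phi=\psi$. This is a clean synthetic shortcut, valid because the interpolators with $a>-1$ are nondegenerate doubly ruled quadrics whose same-family generators are pairwise skew; its only cost is that equality of the quadrics as point sets must still be supplemented by the (immediate) observation that $\psi$ already supplies the interpolating patch over the second quadrangle, which is what statement (3) literally asserts and what the paper's computation delivers directly. Your $(3)\Rightarrow(1)$ via the coincidence equation is a slightly longer route than the paper's one-line version (a single quadric has a single affine mean curvature), but it is also correct.
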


\begin{proof}
Consider a pair of adjacent quadrangles given by \eqref{StandardPair}. Since the strip is ruled, we have that $y_1=0$ and $y_2=1$. Let us begin with $(1)\to(2)$. Assuming the CAMC condition, Equation \eqref{eq:CoincidenceGeneratorsCAMC} becomes an identity, thus it holds for any $s$.  
 To prove that $(2)\to(3)$, observe first that
$$
\phi(u_0,0)=u_0(1,0,0), \ \ \phi(u_0,1)=\frac{1}{1+au_0}(u_0(1+a),1+au_0,u_0(1+a)).
$$
Write $x_1=u_0$. Since compatibility of rulings is equivalent
to the CAMC condition, we conclude that
$$
x_2=\frac{x_1(1+a)}{1+ax_1}=\frac{u_0(1+a)}{1+au_0}.
$$
So $\phi(u_0,1)=(x_2,1,x_2)$, thus proving (3). The implication $(3)\to(1)$ is trivial.
\end{proof}

\subsection{Discrete Demoulin Transforms of a CAMC asymptotic net}

In the smooth case, the intersection of the affine normal with the Lie quadric determines a surface that
is a Demoulin transform of the original surface if and only if the affine mean curvature is constant.
In this section, we describe a discrete version of this property.

The intersection of the normal line at $(u,v)=(0,0)$ with the quadric $\phi$ is 
\begin{equation}\label{eq:DiscreteDemoulinTransform}
Z=\frac{1}{a}\left( (1+a)D+A-B-C \right).
\end{equation}
corresponding to the parameters $u=v=\infty$.

Consider now the pair of quadrangles given by \eqref{StandardPair} with interpolating quadrics $\phi$ given by \eqref{StandardInterpolator} and $\psi$ given by \eqref{eq:Psi}. To calculate the tangent plane to the quadric $\phi$ at $Z$, we use homogeneous coordinates and consider
new variables $\bar{u}=\frac{1}{u}$, $\bar{v}=\frac{1}{v}$. We have
$$
\bar\phi(\bar{u},\bar{v})=\left[ \bar{u}\bar{v}+a: \bar{v}+a:\bar{u}+a:a+1 \right].
$$
At $\bar{u}=\bar{v}=0$,
\begin{equation*}\label{eq:TangentPlane}
\bar{\phi}=\left[a:a:a:a+1\right],\ \bar{\phi}_{\bar{u}}=\left[0:0:1:0\right],\ \bar{\phi}_{\bar{v}}=\left[0:1:0:0\right].
\end{equation*}
On the other hand, denoting by $Y$ given by \eqref{eq:DiscreteDemoulinTransform} in the quadrangle $ACEF$ we obtain
$$
[\bar{b}:(1+\bar{b})F-C-E]=\left[ \bar{b}: (1+\bar{b})x_2-x_1: (1+\bar{b})y_2-y_1-1:(1+\bar{b})x_2\right],
$$
Thus the point $Y$ belongs to the tangent plane to $\phi$ at $Z$ if and only if
\[
\left|
\begin{array}{cccc}
\bar{b} & (1+\bar{b})x_2-x_1 & (1+\bar{b})y_2-y_1-1 & (1+\bar{b})x_2\\
 0 & 1 & 0 & 0\\
0 & 0 & 1 & 0 \\
a & a & a & 1+a
\end{array}
\right|=0,
\]
which is equivalent to
$$
(1+a)\bar{b}=a(1+\bar{b})x_2.
$$
which is equivalent to the pair of quadrangles being CAMC.


\subsection{Discrete affine spheres and the Demoulin property}

In the case of an affine sphere, the surface is Demoulin (\cite{McCarthy-Schief},\cite{Schief-Szeres}). Using our techniques we can recover this result and prove also a converse in case of CAMC.

\begin{prop}
A discrete asymptotic net is an affine sphere if and only if it is CAMC and Demoulin.
\end{prop}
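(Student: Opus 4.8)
The plan is to localise the whole statement to a single pair of adjacent quadrangles in the normal form \eqref{StandardPair}, since the affine sphere property, the CAMC property and the Demoulin property are all conditions on adjacent quadrangles. Two characterisations obtained above will then do the work: first, that the pair is an affine sphere exactly when it is CAMC and the relation \eqref{eq:AdjacentAS}, $y_2-1=(1+b)y_1$, holds; and second, that a net is an affine sphere exactly when the centers of its compatible interpolating quadrics are independent of the quadrangle. It remains only to make the discrete Demoulin property explicit and to match it against these two statements.

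First I would record that, for any interpolating quadric, the Demoulin transform at a vertex is the second intersection of the affine normal line of the quadric with the quadric itself, and that by Equation \eqref{eq:DiscreteDemoulinTransform} this point is simply the reflection $Z=2O-A$ of the vertex $A$ through the center $O$ of the quadric. I would declare the net \emph{Demoulin} when, at every vertex, the Demoulin transforms coming from the incident quadrangles coincide; for the pair \eqref{StandardPair} this is the single equation $Z=Y$ at the common vertex $A$, where $Y=2O_1-A$ is built from the center $O_1$ of the quadric on $ACEF$. Thus $Z=Y$ is equivalent to $O=O_1$.

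For the direction affine sphere $\Rightarrow$ CAMC and Demoulin, an affine sphere is CAMC by the computation following Equation \eqref{eq:Omegaq}, and it has coincident centers by the center characterisation; hence $O=O_1$ at every edge, the reflections agree, and the net is Demoulin. For the converse I would assume CAMC, so that $ax_1=\bar b$, and impose $Z=Y$. Using the center $O_1$ already computed and the compatibility relation $(1+a)(1+b)x_1=x_2$ of Lemma \ref{lemma:ConditionCompatibility}, the first and third coordinates of $Z$ and $Y$ agree automatically, exactly as the first and third coordinates of $O$ and $O_1$ matched under CAMC; the whole content of $Z=Y$ is therefore carried by the second coordinate, which reads $(\bar b+1)y_2-y_1-1=\bar b$, i.e. $(\bar b+1)(y_2-1)=y_1$, and this is precisely \eqref{eq:AdjacentAS} after substituting $\bar b+1=(b+1)^{-1}$. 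By the affine-sphere characterisation, CAMC together with \eqref{eq:AdjacentAS} is the affine sphere condition, which closes the equivalence on the pair; a chain of adjacent quadrangles then propagates the conclusion to the whole net.

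The step I expect to be delicate is the bookkeeping that isolates \eqref{eq:AdjacentAS} cleanly: one must check that, once CAMC and compatibility are in force, the Demoulin equation $Z=Y$ contributes nothing in the first and third coordinates and reduces to a single scalar relation in the second, so that it neither re-imposes the CAMC relation nor the compatibility relation already assumed. One must also verify that the four-fold coincidence of Demoulin transforms at a vertex is equivalent to the pairwise coincidence along each incident edge, so that the local pair computation suffices and, by connectivity of $\mathcal{D}^*$, yields coincident centers and hence an affine sphere globally.
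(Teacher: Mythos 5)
There is a genuine gap: you have substituted your own definition of the discrete Demoulin property for the one the paper actually uses, and the two do not agree. In the paper, the discrete Demoulin transforms are the points of a common edge at which the generators of the two adjacent interpolating quadrics coincide; these are governed by the quadratic \eqref{eq:CoincidenceGenerators} in $s$, and the net is \emph{Demoulin} when this quadratic has a double root (the degeneration of the two transforms), following \cite{McCarthy-Schief} and \cite{Schief-Szeres}. Your definition --- coincidence at a common vertex of the antipodal points $Z=2O-A$ and $Y=2O_1-A$ --- is equivalent to $O=O_1$, which by the earlier lemma on centers is already the full affine sphere condition. Under that reading the proposition collapses to ``affine sphere iff CAMC and affine sphere'': the forward direction is just the known implication affine sphere $\Rightarrow$ CAMC, and the converse is vacuous. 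The fact that the CAMC hypothesis becomes redundant is the tell-tale sign that this cannot be the intended definition; with the correct one, a non-CAMC net may have a double \emph{finite} root of \eqref{eq:CoincidenceGenerators}, so Demoulin alone does not force the affine sphere condition and CAMC is genuinely needed in the converse.

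The paper's argument, which your proposal does not reproduce, runs entirely through the generator-coincidence equation: for an affine sphere, CAMC ($ax_1=\bar b$) kills the quadratic term of \eqref{eq:CoincidenceGenerators} and \eqref{eq:AdjacentAS} kills the linear coefficient of \eqref{eq:CoincidenceGeneratorsCAMC} (note $\bar b+1=(1+b)^{-1}$, so $(\bar b+1)(1-y_2)+y_1=0$ is exactly \eqref{eq:AdjacentAS}), hence both roots sit at $s=\infty$ and the net is Demoulin; conversely, for a CAMC net a double root of \eqref{eq:CoincidenceGeneratorsCAMC} forces that same linear coefficient to vanish, giving \eqref{eq:AdjacentAS}, and CAMC together with \eqref{eq:AdjacentAS} is the affine sphere condition. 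Your observation that $Z$ is the reflection of the vertex through the center, and your bookkeeping reducing $O=O_1$ under CAMC and compatibility to the single scalar relation \eqref{eq:AdjacentAS}, are correct but essentially re-derive the paper's lemma characterizing affine spheres among CAMC nets; what is missing is the bridge from that lemma to the actual Demoulin condition, namely the double-root analysis of \eqref{eq:CoincidenceGenerators}.
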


\begin{proof}
If the asymptotic net is an affine sphere, then Equations \eqref{eq:AdjacentAS} and \eqref{eq:CoincidenceGeneratorsCAMC}
imply that both solutions of the equation of coinciding generators are $s=\infty$. Thus we have a double root for Equation \eqref{eq:CoincidenceGenerators}. Since this fact holds for any pair of adjacent quadrangles, we conclude that the asymptotic net is discrete Demoulin (\cite{McCarthy-Schief},\cite{Schief-Szeres}). 

Conversely, for a CAMC discrete asymptotic net, if Equation \eqref{eq:CoincidenceGeneratorsCAMC} admits a double root then
Equation \eqref{eq:AdjacentAS} holds. This implies that the asymptotic net is an affine sphere. 
\end{proof}

\subsection{Discrete affine minimal asymptotic nets and $Q$-surfaces}

Consider a horizontal strip $(j+\tfrac{1}{2})$. By Proposition \ref{prop:MinimalParallelPlanes}, there exists a plane 
$\pi(j+\tfrac{1}{2})$ such that $q_2(i,j+\tfrac{1}{2})$ is parallel to $\pi$. The intersection of $\pi(j+\tfrac{1}{2})$ with the plane at infinity
is a line that we shall denote $l=l(j+\tfrac{1}{2})$.

\begin{lem}
The line $l(j+\tfrac{1}{2})$ is a common generator of the interpolating paraboloids of the quadrangles $(i,j+\tfrac{1}{2})$, for any $i$. 
\end{lem}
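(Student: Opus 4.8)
The plan is to reduce to the explicit form of an affine minimal interpolator and then exhibit $l(j+\tfrac{1}{2})$ directly as the line at infinity swept out by one family of rulings. First I would record that, since the net is affine minimal, the CAMC condition forces $H=0$ at every quadrangle, and by the formula $H=-\tfrac{2a}{\sqrt{1+a}\,\Omega}$ this means the parameter $a$ vanishes. Hence each interpolating quadric of the strip is the paraboloid obtained by setting $a=0$ in Equation \eqref{GeneralInterpolator}, namely
$$
\phi(u,v)=A+u(B-A)+v(C-A)+uv\,(D+A-B-C),
$$
where $A,B,C,D$ are the four vertices of the quadrangle in the notation $A=q(i,j)$, $B=q(i+1,j)$, $C=q(i,j+1)$, $D=q(i+1,j+1)$.

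Next I would analyze the family of rulings transverse to the strip, i.e.\ the $v$-rulings joining the edges $AC$ and $BD$. Differentiating in $v$ at fixed $u$ gives the direction of such a ruling as
$$
\partial_v\phi(u,v)=(1-u)(C-A)+u\,(D-B),
$$
an affine combination of the two $q_2$-edges $C-A=q_2(i,j+\tfrac{1}{2})$ and $D-B=q_2(i+1,j+\tfrac{1}{2})$. Thus every ruling of this family is parallel to the plane spanned by these two edge vectors, and—passing to homogeneous coordinates $(X:Y:Z:W)$ with plane at infinity $\{W=0\}$—the points at infinity of these rulings sweep out exactly the projective line through $[C-A]$ and $[D-B]$ as $u$ ranges over $\mathbb{R}\cup\{\infty\}$. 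Since the projective closure of the paraboloid is a smooth quadric meeting $\{W=0\}$ along two of its generators, this line at infinity is precisely a generator of the quadric.

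It then remains to identify this generator with $l(j+\tfrac{1}{2})$ uniformly in $i$. By Proposition \ref{prop:MinimalParallelPlanes}, all the vectors $q_2(i,j+\tfrac{1}{2})$ of the strip lie in the common plane $\pi(j+\tfrac{1}{2})$; in particular both $C-A$ and $D-B$ belong to $\pi$, so the generator-at-infinity computed above is contained in, hence equal to, $\pi\cap\{W=0\}=l(j+\tfrac{1}{2})$. As this line depends only on $\pi(j+\tfrac{1}{2})$ and not on $i$, it is a generator common to every interpolating paraboloid of the strip, which is the assertion of the lemma.

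The main obstacle I anticipate is the bookkeeping needed to make the ``generator at infinity'' statement rigorous: one must confirm that the projective closure of the affine paraboloid is a smooth quadric containing the line $\pi\cap\{W=0\}$, and that the two $q_2$-edges $C-A$ and $D-B$ are linearly independent, so that they actually span $\pi$ and the swept line at infinity is all of $l(j+\tfrac{1}{2})$—a point I expect to follow from the non-degeneracy assumption $\delta>0$. Once these are in place, the coincidence of the generator with $l(j+\tfrac{1}{2})$ is immediate from the affine-combination formula for $\partial_v\phi$.
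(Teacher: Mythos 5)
Your proof is correct and follows essentially the same route as the paper: both arguments identify the common generator as the line at infinity of the interpolating paraboloids, traced out by the points at infinity of the rulings transverse to the strip. The only real difference is presentational --- the paper checks the coincidence by an explicit homogeneous-coordinate computation on the standard adjacent pair \eqref{StandardPair}, obtaining the line $X_1=X_4=0$ for both quadrics, whereas you work invariantly with $\partial_v\phi=(1-u)(C-A)+u(D-B)$ and Proposition \ref{prop:MinimalParallelPlanes}, which yields the uniformity in $i$ directly; the linear independence of $C-A$ and $D-B$ that you flag does indeed follow from the non-degeneracy assumption $\delta>0$, since $q_{12}=(D-B)-(C-A)$ would otherwise be dependent on $q_2$.
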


\begin{proof}
In the affine minimal case, we have that $x_1=x_2$ and the interpolating quadrics
$$
\phi(u,v)=(u:v:uv:1), \ \ \psi(s,t)=\left(tx_1: s+ty_1+st(y_2-y_1-1): stx_1:1\right),
$$
are paraboloids. The lines
$$
\phi(u,\infty)=(0:1:u:0), \ \ \psi(\infty,t)=(0: 1+t(y_2-y_1-1): tx_1:0)
$$
coincide, their equation being $X_1=X_4=0$, thus proving the lemma.
\end{proof}

\begin{cor}
A discrete affine minimal surface is a discrete $Q$-surface.
\end{cor}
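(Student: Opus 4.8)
The plan is to read the corollary off the preceding lemma by identifying the common generators $l(j+\tfrac{1}{2})$ with the parameter lines of the discrete Demoulin transform $Z$. A discrete asymptotic net is a $Q$-surface precisely when its Demoulin transform carries two families of straight parameter polylines (\cite{McCarthy-Schief},\cite{Schief-Szeres}), so it suffices to exhibit $Z$ in the affine minimal case and verify that both of its parameter families are straight.

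First I would locate $Z$. The Demoulin point corresponds to $u=v=\infty$ on the interpolating paraboloid; for $\phi(u,v)=(u:v:uv:1)$ one divides the homogeneous coordinates by $uv$ and lets $u,v\to\infty$ to obtain $Z=(0:0:1:0)$, a point on the plane at infinity. This agrees with the smooth identity $Z=(\xi:0)$ from Section 2.5, so $Z$ is the projective direction of the affine normal. Comparing with the preceding lemma, whose common generator $l(j+\tfrac{1}{2})$ is the line $X_1=X_4=0$, I note that $Z=(0:0:1:0)$ satisfies $X_1=0$ and $X_4=0$; hence the Demoulin point of every quadrangle in the strip lies on $l(j+\tfrac{1}{2})$. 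Because a single line $l(j+\tfrac{1}{2})$ serves all quadrangles $(i,j+\tfrac{1}{2})$, the horizontal polyline $i\mapsto Z$ is contained in that line and is therefore straight.

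To complete the argument I would invoke symmetry. Proposition \ref{prop:MinimalParallelPlanes} treats horizontal and vertical strips on an equal footing, so repeating the lemma for a vertical strip $(i+\tfrac{1}{2})$ produces a common generator $l(i+\tfrac{1}{2})$ that again contains $Z$; consequently the vertical polyline $j\mapsto Z$ is straight as well. With both parameter families of $Z$ straight, $Z$ is a Demoulin transform with straight asymptotic lines, and the affine minimal net is a discrete $Q$-surface.

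The step I expect to be the main obstacle is reconciling this picture with the exact definition of discrete $Q$-surface in (\cite{McCarthy-Schief},\cite{Schief-Szeres}): one must confirm that ``the Demoulin transform has straight parameter polylines in both directions'' is the operative definition, and that the generator $l$ lying on the plane at infinity is correctly interpreted as the straight carrier of a parameter line of $Z$ rather than as a mere tangent direction of the paraboloids.
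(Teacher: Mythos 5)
Your argument is correct in substance but takes a genuinely different route from the paper. The paper's proof is a one-liner: the preceding lemma already shows that \emph{all} interpolating paraboloids of a horizontal strip share the single generator $l(j+\tfrac{1}{2})$, and this shared-generator configuration is precisely what Definition 7.3 of \cite{McCarthy-Schief} asks for, so the corollary follows immediately. You instead pass to the discrete Demoulin transform: you compute $Z=(0:0:1:0)$ (equivalently $Z=(0:D+A-B-C)$, the point at infinity in the direction $q_{12}$), observe that it lies on $l(j+\tfrac{1}{2})$ (correct, since $q_{12}=(q_2)_1$ is a difference of $q_2$-vectors and hence parallel to $\pi(j+\tfrac12)$), invoke the symmetric statement for vertical strips (which the paper does not state but which does follow by the same computation, since $q_{12}=(q_1)_2$ as well), and conclude that both families of parameter polylines of $Z$ are straight. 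This faithfully discretizes the smooth argument of Section 2.6 and is a nice way to make the smooth--discrete correspondence explicit. The one caveat is exactly the one you flag: the cited Definition 7.3 is phrased in terms of the lattice of Lie quadrics sharing generators along one family of lattice lines, not in terms of straightness of the Demoulin transform, so your final step appeals to a characterization whose equivalence to the definition you have not verified. Since your argument already contains the lemma's shared-generator conclusion as an ingredient, nothing is actually missing — but the paper's direct appeal to the definition is shorter and avoids the equivalence question, while your version buys the additional geometric information that the Demoulin points of a strip are collinear along the common generator.
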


\begin{proof}
By the above lemma, the common generators of the interpolating quadrics along a strip are collinear. Thus the discrete affine minimal surface is a discrete $Q$-surface (\cite[Def.7.3]{McCarthy-Schief}).
\end{proof}

\section*{\textbf{Acknowledgement}}

The authors are thankful to CAPES and CNPq for financial support during the preparation of this paper. They also thank Pontifical Catholic University of Rio de Janeiro.

\bibliographystyle{amsplain}

\end{document}